\documentclass[11pt]{article}
\usepackage{graphicx,color,xcolor} 
\usepackage{float}
\usepackage{placeins}

\usepackage{amsmath,amssymb,amsfonts,amsthm,mathrsfs}
\usepackage[title]{appendix}
\usepackage{hyperref}

\definecolor{revisionpurple}{HTML}{7030A0}

\newtheorem{thm}{Theorem}

\newtheorem{prop}[thm]{Proposition}
\newtheorem{defi}[thm]{Definition}

\newtheorem{coro}[thm]{Corollary} 
\newtheorem{remark}[thm]{Remark}

\counterwithin{thm}{section}
\counterwithin{equation}{section}
\counterwithin{figure}{section}

\newcommand{\cA}{\mathcal A}

\newcommand{\cK}{\mathcal K} \newcommand{\cL}{\mathcal L}
 
 \newcommand{\cP}{\mathcal P} 
 \newcommand{\cR}{\mathcal R}
 \newcommand{\cT}{\mathcal T}

\begin{document}
\title{Dirac Points in High-Contrast Dielectric Honeycomb Structures}
\author{Yi Huang\thanks{School of Mathematical Sciences, Zhejiang University, Hangzhou, 310027, China (yi720888@zju.edu.cn, pingliu@zju.edu.cn).}
\and Bowen Li\thanks{Department of Mathematics, City University of Hong Kong, Kowloon Tong, Hong Kong SAR
  (bowen.li@cityu.edu.hk).} \and Ping Liu\footnotemark[1] \thanks{ZJU Center for Interdisciplinary Applied Mathematics, Zhejiang University, Hangzhou, 310027, China}
}
\maketitle

\begin{abstract} 
We study Dirac points for a two-dimensional scalar Helmholtz equation
in a periodic dielectric medium with two penetrable inclusions per
honeycomb cell.  In a joint small-inclusion and high-contrast regime,
we reduce the Bloch eigenvalue problem to a two-dimensional interaction
matrix and derive a two-term asymptotic expansion for the two relevant
eigenvalues, uniformly for quasi-momenta separated from the reciprocal
lattice.  At a vertex \(K\) of the Brillouin zone, rotation and
inversion-conjugation symmetries force these eigenvalues to coincide.
A Schur-complement analysis of the exact operator pencil, together
with a nonvanishing lattice Green-function coefficient, proves that
the resulting double eigenvalue has a nonzero conical splitting and
gives an asymptotic formula for the conical slope in the scaled eigenvalue.
For fixed
sufficiently large contrast, symmetry-preserving changes of variables
and analytic perturbation theory then continue this Dirac point as the
inclusion size varies: it remains nondegenerate except at a relatively
closed discrete set of admissible sizes. 
\end{abstract}
\section{Introduction}

Honeycomb structures provide a natural setting in which lattice symmetry
produces conical band crossings and unusual wave-transport phenomena.  The
connection between honeycomb geometry and electronic dispersion goes back to
Wallace's tight-binding description of graphite \cite{introWallace1947}.  The
experimental observation of massless Dirac behavior in graphene
\cite{introNovoselov2005} and the subsequent development of its electronic
theory \cite{introCastroNeto2009} motivated the search for analogous effects
in classical-wave systems.  In photonics, conical diffraction has been
observed in honeycomb lattices \cite{introPeleg2007}, and the roles of Dirac
cones, symmetry breaking, and edge states have been investigated in photonic
crystals \cite{introOchiaiOnoda2009}.  Proposals for nonreciprocal photonic
edge transport \cite{introHaldaneRaghu2008} and the experimental realization
of photonic Floquet topological insulators \cite{introRechtsman2013} further
demonstrate the importance of understanding and controlling such band
crossings.

The mathematical analysis of Dirac points begins with the Floquet--Bloch
spectrum of periodic differential operators; see \cite{introKuchment2016}
for a general account.  For continuum Schr\"odinger operators, Fefferman and
Weinstein established the existence and perturbative stability of Dirac
points for honeycomb lattice potentials under an appropriate nondegeneracy
condition \cite{introFeffermanWeinstein2012}, and subsequently justified the
effective Dirac dynamics of spectrally localized wave packets
\cite{introFeffermanWeinstein2014}.  Berkolaiko and Comech clarified the role
of unitary and antiunitary symmetries in spectral degeneracies and conical
crossings \cite{introBerkolaikoComech2018}.  Related mechanisms have also
been studied for quantum graphs \cite{introKuchmentPost2007}, point
scatterers \cite{introLee2016}, and continuum honeycomb potentials in the
strong-binding regime \cite{introFeffermanLeeThorpWeinstein2018}.

For classical waves, the geometry and contrast of the material coefficients
introduce additional analytical difficulties.  Dirac points and edge states
for honeycomb elliptic operators were studied by Lee-Thorp, Weinstein, and
Zhu \cite{introLeeThorpWeinsteinZhu2019}, building on methods developed for
continuum honeycomb edge states \cite{introFeffermanLeeThorpWeinstein2016}.
Cassier and Weinstein derived asymptotic expansions for Dirac eigenpairs and
velocities in high-contrast honeycomb media under spectral isolation and
nonvanishing assumptions \cite{introCassierWeinstein2021}.  At subwavelength
scales, Dirac cones and effective Dirac dynamics have been established for
honeycomb arrays of Minnaert bubbles
\cite{introAmmariBubbles2020,introAmmariHomogenization2020}.  Dirac points
have also been proved for honeycomb arrays of impenetrable obstacles
\cite{introLiLinZhang2023}.  Although these problems share the same lattice
symmetries, their spectral scales, limiting operators, and resonance
mechanisms are different.

In this paper, we consider a two-dimensional scalar dielectric medium with
two penetrable inclusions in each cell of a triangular lattice.  The
inclusions are obtained by placing scaled copies of a threefold
rotation-invariant reference domain at the sites \(z_1\) and \(z_2\), defined
below.  They are paired so that the periodic structure has inversion
symmetry, even when the
individual reference domain does not.  The wave field satisfies a weighted
Helmholtz eigenvalue problem with Bloch quasi-momentum \(\xi\).  The parameter
\(\eta\) controls the inclusion size, while \(\tau>0\) is the dielectric
contrast.  Thus the problem contains two interacting singular parameters:
the inclusions become small as \(\eta\to0\), whereas their refractive index
becomes large as \(\tau\to\infty\).  Integral-operator methods have
previously been used to characterize subwavelength resonances of isolated
high-index dielectric particles in scalar wave models
\cite{introAmmariDielectric2019} and in the full Maxwell system
\cite{introAmmariLiZou2023}.  Here we adapt this approach to a
two-dimensional periodic setting, where the logarithmic singularity of the
quasiperiodic Green function determines the leading spectral scale.

We now summarize the main results.  First, in the joint small-inclusion and
high-contrast regime, we prove that the Bloch problem has two characteristic
values near the leading resonant scale.  Writing
\(\widehat\lambda=\tau\lambda\) and \(L=|\log\eta|\), we label the two values by
\(\widehat\lambda_-(\xi)\leq\widehat\lambda_+(\xi)\),
suppressing their dependence on \(\tau,\eta\). Their two-term
asymptotic expansion is
\[
 \widehat\lambda_\pm(\xi)
 =
 \frac{2\pi}{|D|\eta^2L}
 \left[
 1-\frac{2\pi\mu_\mp(\xi)}{|D|L}
 +O\left(\frac1{L^2}+\frac1{\tau\eta^2L^2}\right)
 \right],
\]
uniformly for quasi-momenta separated from the reciprocal lattice, provided
\(1/(\tau\eta^2L)\to0\).  The coefficients \(\mu_\pm(\xi)\) are the
eigenvalues of a two-dimensional interaction matrix containing both the
logarithmic self-interaction of each inclusion and the Green-function
coupling between the two sites.  The remainder retains explicit dependence
on both inclusion size and contrast.

Second, at a Brillouin-zone vertex \(K\), rotation and
inversion-conjugation symmetries force the two characteristic values to
coincide exactly.  We then analyze the exact operator pencil, rather than
differentiating the preceding eigenvalue approximation, and prove that the
double eigenvalue has a nonzero linear splitting in the quasi-momentum.  This
establishes an exact Dirac cone and yields an asymptotic formula for its
slope.  The required nonvanishing condition is reduced to a derivative of
the lattice Green function and is verified by an exponentially convergent
Ewald representation.

Third, we fix a sufficiently large contrast and vary the inclusion size.
Symmetry-preserving changes of variables transfer the moving-interface
problems to fixed reference spaces, where analytic perturbation theory can
be applied in each rotation sector.  Starting from the small-inclusion Dirac
point, we obtain a real-analytic eigenvalue branch on the entire
non-overlapping size interval \((0,\eta_{\mathrm{c}})\).  There is a relatively
closed discrete set \(\mathcal C_\tau\) such that the continued crossing is
a nondegenerate Dirac point for every
\(\eta\in(0,\eta_{\mathrm{c}})\setminus\mathcal C_\tau\).  The ordered band index
of this branch may change at spectral crossings.  Finally, fixed-domain
Nystr\"om computations, cross-checked against a Fourier material-matrix
discretization where feasible, compare the numerical eigenvalues for
three-petal inclusions with the asymptotic prediction.

The main contributions of this work are threefold.  First, the analysis
retains the inclusion size and dielectric contrast as independent
parameters and identifies their distinct contributions to the eigenvalue
remainder.  Second, it separates the approximation of the eigenvalues
from the proof of the exact conical splitting: the latter is obtained from a
Schur-complement reduction of the full analytic operator pencil and a
quantitative nonvanishing argument.  Third, the local small-inclusion
construction is combined with a fixed-domain analytic formulation to
continue the Dirac point beyond the asymptotic regime, with all possible
failures confined to a discrete set of inclusion sizes.

The paper is organized as follows.  Section~2 introduces the honeycomb
geometry, its symmetries, and the Floquet--Bloch formulation.  Section~3
derives the small-inclusion asymptotics and proves the symmetry-protected
degeneracy and conical splitting at \(K\).
Section~4 establishes the analytic continuation of the Dirac point with
respect to the inclusion size.  Section~5 presents numerical tests of the
eigenvalue asymptotics and the conical slope.
Appendices~\ref{app:c-nonzero} and~\ref{app:exact-Dirac-cone}
establish the nonvanishing Green-function coefficient and the exact
conical slope, respectively.

\section{Model and preliminaries}

\subsection{Geometric setting and dielectric model}
As illustrated in Figure~\ref{fig:cellhomo}, we consider an infinite honeycomb structure generated by the triangular lattice
\[
\Lambda:=\mathbb Z \mathbf e_1\oplus \mathbb Z \mathbf e_2
=
\left\{\ell_1\mathbf e_1+\ell_2\mathbf e_2:\ell_1,\ell_2\in\mathbb Z\right\},
\]
where
\[
\mathbf e_1=a\left(\frac{\sqrt3}{2},-\frac12\right)^T,
\qquad
\mathbf e_2=a\left(\frac{\sqrt3}{2},\frac12\right)^T.
\]
Throughout this paper, we set the lattice constant \(a=1\). The fundamental cell of \(\Lambda\) is chosen as
\begin{equation}\label{eq:cellz}
Y:=
\left\{\ell_1\mathbf e_1+\ell_2\mathbf e_2:
\ell_1,\ell_2\in[-1/2,1/2)\right\}.
\end{equation}

We shall use two symmetry transformations on \(\mathbb R^2\): the clockwise rotation \(R\) by angle \(2\pi/3\), and the inversion \(T\) about the origin, given by
\begin{equation}\label{eq:RT-coordinate}
R x
:=
\begin{pmatrix}
-\frac12 & \frac{\sqrt3}{2}\\
-\frac{\sqrt3}{2} & -\frac12
\end{pmatrix}x,
\qquad
Tx:=-x.
\end{equation}
They induce actions on functions by
\begin{equation}\label{eq:RT-function}
(Rf)(x):=f(R^{-1}x),
\qquad
(Tf)(x):=f(T^{-1}x).
\end{equation}
A function \(f\) is called \(R\)-invariant if \(Rf=f\), and \(T\)-invariant if \(Tf=f\).

\begin{figure}[ht]
\begin{center}
\includegraphics[height=4cm]{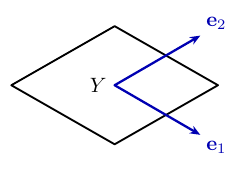}
\hspace{1cm}
\includegraphics[height=5cm]{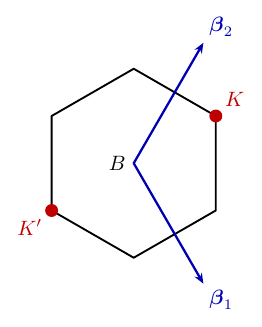}
\caption{The fundamental cell \(Y\) of the triangular lattice \(\Lambda\) (left) and the Brillouin zone \(B\) (right).}
\label{fig:cellhomo}
\end{center}
\end{figure}

The reciprocal lattice \(\Lambda^*\) is defined by
\[
\Lambda^*
:=
\left\{
m_1\boldsymbol\beta_1+m_2\boldsymbol\beta_2:
m_1,m_2\in\mathbb Z
\right\},
\]
where
\begin{equation}\label{eq:reciprocal_vectors}
\boldsymbol\beta_1=
2\pi\left(\frac1{\sqrt3},-1\right)^T,
\qquad
\boldsymbol\beta_2=
2\pi\left(\frac1{\sqrt3},1\right)^T.
\end{equation}
These vectors satisfy
\[
\mathbf e_i\cdot \boldsymbol\beta_j=2\pi\delta_{ij},
\qquad i,j=1,2.
\]
The first Brillouin zone \(B\) is the Wigner--Seitz cell of the reciprocal lattice \(\Lambda^*\). It is a regular hexagon centered at the origin. Its vertices are the high-symmetry points
\[
K,\quad RK,\quad R^2K,
\qquad
K^\prime,\quad RK^\prime,\quad R^2K^\prime,
\]
where
\begin{equation}\label{eq:Kpoints}
K:=2\pi\left(\frac1{\sqrt3},\frac13\right)^T
=
\frac13\boldsymbol\beta_1+\frac23\boldsymbol\beta_2,
\qquad
K^\prime:=-K.
\end{equation}
The points \(K\) and \(K^\prime\) are inequivalent vertices of the Brillouin zone modulo the reciprocal lattice.

\paragraph{Two small inclusions in the fundamental cell}

We now specify the geometry of the inclusions. Let
\begin{equation}\label{eq:z1z2q}
z_1=\frac{\mathbf e_1+\mathbf e_2}{6}
=
\left(\frac{\sqrt3}{6},0\right)^T,
\qquad
z_2=-z_1
=
\left(-\frac{\sqrt3}{6},0\right)^T,
\end{equation}
and let
\begin{equation}\label{eq:q_rotation_center}
q=-\frac{\mathbf e_1+\mathbf e_2}{2}
=
\left(-\frac{\sqrt3}{2},0\right)^T .
\end{equation}
The point \(q\) is the left vertex of the rhombic cell \(Y\).

Let \(D\subset\mathbb R^2\) be a bounded reference domain containing the origin. We assume that \(D\) is invariant under the rotation by \(2\pi/3\) about the origin. For a small parameter \(\eta>0\), we define
\begin{equation}\label{eq:D_eta}
D_{\eta,1}:=z_1+\eta D,
\qquad
D_{\eta,2}:=z_2-\eta D,
\qquad
D_\eta:=D_{\eta,1}\cup D_{\eta,2}.
\end{equation}
The periodic array of inclusions is
\begin{equation}\label{eq:D_eta_per}
D_\eta^{\mathrm{per}}
:=
\bigcup_{\ell\in\Lambda}
\left(D_\eta+\ell\right).
\end{equation}

\begin{figure}[ht]
\begin{center}
\includegraphics[height=8cm]{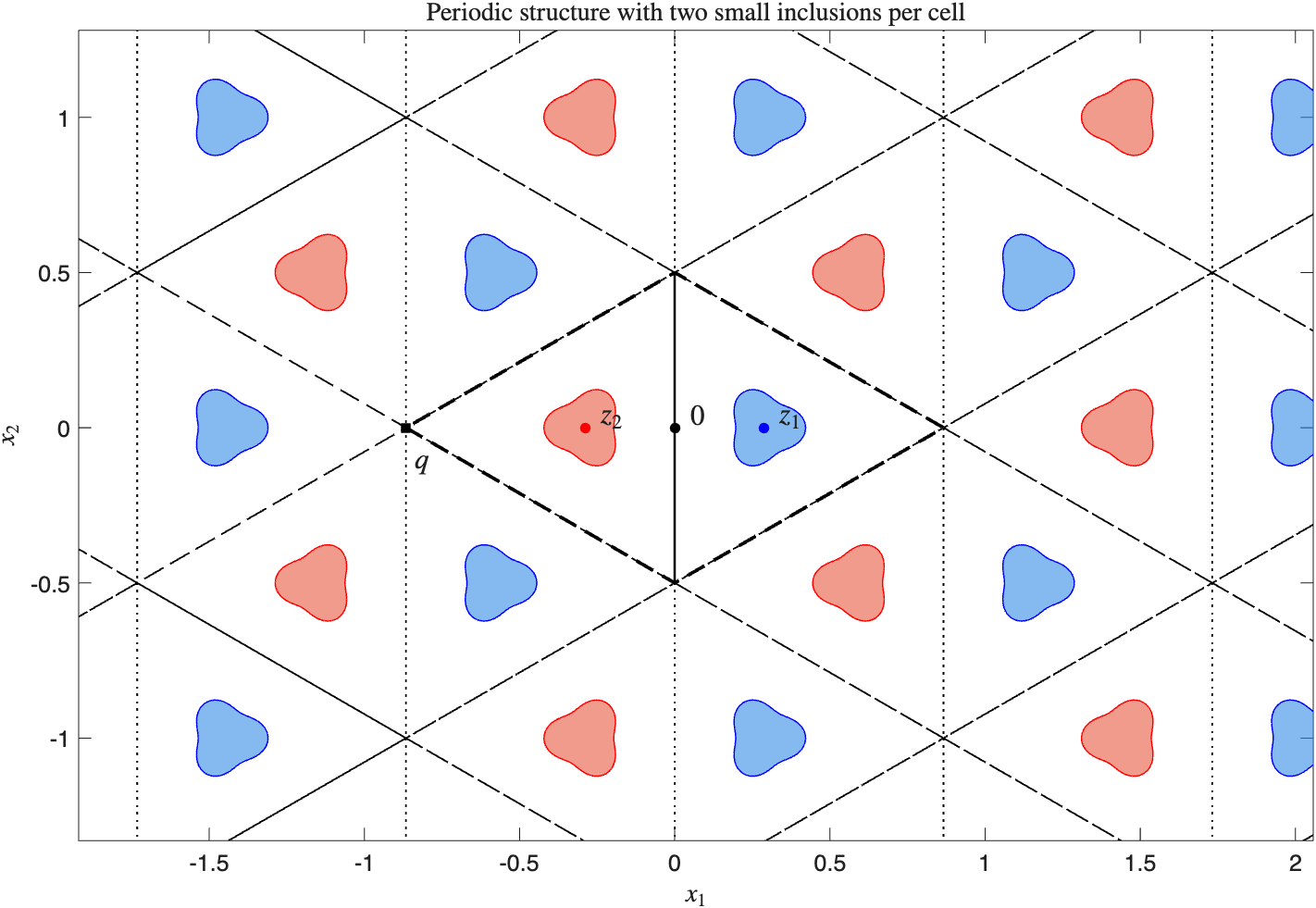}
\caption{Periodic structure with two small inclusions per cell.}
\label{fig:two_inclusions}
\end{center}
\end{figure}

Accordingly, the relative permittivity (the squared refractive index) is taken to be
\begin{equation}\label{eq:n_eta}
n_\eta(x)
=
1+\tau \chi_{D_\eta^{\mathrm{per}}}(x),
\end{equation}
where \(\tau>0\) is the contrast parameter.

We consider the variable-coefficient Helmholtz equation
\begin{equation}\label{equ:variable-coefficient-Helmholtz-equation}
\Delta u(x)+k^2 n_\eta(x)u(x)=0
\qquad \text{in } \mathbb R^2.
\end{equation}

\subsection{Preliminaries}

To formulate the spectral problem, let
\[
L_{n_\eta}^2(\mathbb R^2):=L^2(\mathbb R^2,n_\eta(x)\,dx).
\]
We define the sesquilinear form
\[
\Phi(v,u):=
\int_{\mathbb R^2}\overline{\nabla v}\cdot \nabla u\,dx,
\qquad
u,v\in H^1(\mathbb R^2).
\]
Let \(P\) denote the self-adjoint operator associated with this form on
\(L_{n_\eta}^2(\mathbb R^2)\). Since \(n_\eta\) is \(\Lambda\)-periodic, the
Floquet--Bloch transform decomposes \(P\) into operators acting on a single
cell, one for each quasi-momentum \(\xi\in B\).

For each \(\xi\in B\), the corresponding space of \(\xi\)-quasiperiodic
functions is
\[
L_{n_\eta,\xi}^2(Y):=
\left\{
u|_Y:
u\in L_{\mathrm{loc}}^2(\mathbb R^2),\
u(x+\ell)=e^{i\xi\cdot\ell}u(x)
\ \text{for all }\ell\in\Lambda
\right\}.
\]
On this space, the fiber operator \(P_\xi\) is associated with the form
\[
\int_Y \overline{\nabla v}\cdot \nabla u\,dx
\]
with form domain \(H_\xi^1(Y)\), the space of \(\xi\)-quasiperiodic
\(H^1\) functions on \(Y\). Alternatively, one can work
with periodic functions by removing the Bloch phase. The map
\[
U_\xi u(x):=e^{-i\xi\cdot x}u(x)
\]
is unitary from \(L_{n_\eta,\xi}^2(Y)\) onto \(L_{n_\eta,0}^2(Y)\). Under this
map, \(P_\xi\) becomes the operator \(\widetilde P_\xi\), associated with the
form
\[
\int_Y
\overline{(\nabla+i\xi)v}\cdot(\nabla+i\xi)u\,dx
\]
with periodic \(H^1\) form domain. Thus
\[
\widetilde P_\xi=U_\xi P_\xi U_\xi^{-1}.
\]
In particular, the two fiber representations have the same spectrum:
\[
\sigma(P_\xi)=\sigma(\widetilde P_\xi).
\]
The Floquet--Bloch decomposition therefore gives
\[
\sigma(P)
=
\bigcup_{\xi\in B}\sigma(P_\xi)
=
\bigcup_{\xi\in B}\sigma(\widetilde P_\xi).
\]
Since \(P_\xi\) is a nonnegative self-adjoint elliptic operator with compact resolvent, its spectrum consists of a sequence of eigenvalues
\[
0\leq \lambda_1(\xi)\leq \lambda_2(\xi)\leq \cdots,
\]
counted with multiplicity. The functions \(\lambda_j(\xi)\) are called the band dispersion functions. They are Lipschitz continuous with respect to \(\xi\in B\).

To study these eigenvalues, we compare \(P_\xi\) with the free fiber
Laplacian \(-\Delta_\xi\), defined by the form
\[
\int_Y\overline{\nabla v}\cdot\nabla u\,dx
\]
on \(L^2(Y)\) with \(\xi\)-quasiperiodic \(H^1\) form domain.
For each $k\in \Lambda^*$, define
\[
e_k(x):=|Y|^{-1/2}e^{i(k+\xi)\cdot x}, \qquad x\in Y.
\]
Then $\{e_k\}_{k\in\Lambda^*}$ is a complete orthonormal basis of $L^2(Y)$. Moreover,
\[
e_k\in D(-\Delta_\xi), \qquad
-\Delta_\xi e_k = |k+\xi|^2 e_k,
\qquad \forall k\in \Lambda^*.
\]
Consequently, the spectrum of $-\Delta_\xi$ is purely discrete and is given by \cite[p.~65, Theorem~2.34]{lewin2024spectral}
\[
\sigma(-\Delta_\xi)=\{|k+\xi|^2:\ k\in\Lambda^*\}.
\]
This Fourier basis gives the following expression for the free resolvent
kernel.
\begin{defi}
Let $\lambda\in \mathbb C\setminus \sigma(-\Delta_\xi)$. We define the Green function associated with the fiber operator $-\Delta_\xi$ by
\begin{equation}\label{equ:Gfourier}
G_\lambda^\xi(x,y)
:=
\sum_{k\in\Lambda^*}
\frac{e_k(x)\overline{e_k(y)}}{|k+\xi|^2-\lambda}
=
\frac{1}{|Y|}\sum_{k\in\Lambda^*}
\frac{e^{i(k+\xi)\cdot (x-y)}}{|k+\xi|^2-\lambda}
,
\qquad x,y\in Y.
\end{equation}
The series is understood in \(L^2(Y\times Y)\), as justified in
Proposition~\ref{prop:resolvent_kernel}.
\end{defi}

We will use this kernel to express the perturbation caused by the
inclusions as an integral operator. The Hilbert--Schmidt criterion
provides a convenient way to control that operator.
\begin{defi}
An operator \(T\in\cL(H)\) is Hilbert--Schmidt if
\(\|T\|_2^2:=\operatorname{tr}(T^*T)<\infty\). The class of such
operators is denoted by \(\mathfrak S_2\).
\end{defi}
Every Hilbert--Schmidt operator is compact. We use the following
integral-kernel characterization \cite[p.~210]{reed2012methods}.
\begin{thm}
An operator \(T\in\cL(L^2(Y))\) is Hilbert--Schmidt if and only if
there is a kernel \(K\in L^2(Y\times Y)\) such that
\[
Tf(x)=\int_Y K(x,y)f(y)\,dy
\qquad\text{for every }f\in L^2(Y).
\]
In that case,
\[
\|T\|_2^2=\int_{Y\times Y}|K(x,y)|^2\,dx\,dy.
\]
\end{thm}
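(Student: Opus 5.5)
The statement is the classical characterization of Hilbert--Schmidt operators on \(L^2(Y)\), and I would prove it with an orthonormal basis. Here the Fourier system \(\{e_k\}_{k\in\Lambda^*}\) introduced above is convenient, though any orthonormal basis \(\{\phi_j\}\) of \(L^2(Y)\) works. The key fact is that the products \(\Phi_{jm}(x,y):=\phi_j(x)\overline{\phi_m(y)}\) form an orthonormal basis of \(L^2(Y\times Y)\). Orthonormality follows from Fubini. For completeness, suppose \(F\in L^2(Y\times Y)\) is orthogonal to all \(\Phi_{jm}\). Then for a.e.\ \(x\), the function \(F(x,\cdot)\) has all coefficients \(\int_Y F(x,y)\phi_m(y)\,dy\) equal to zero. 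A second Fubini argument in \(x\) then gives \(F=0\) a.e.

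For the direction ``kernel \(\Rightarrow\) Hilbert--Schmidt'', take \(K\in L^2(Y\times Y)\) and define \(Tf(x)=\int_Y K(x,y)f(y)\,dy\). By Cauchy--Schwarz and Fubini, \(T\) is bounded with \(\|T\|\le\|K\|_{L^2}\). Next I compute
\[
\langle \phi_j, T\phi_m\rangle=\int_{Y\times Y}\overline{\phi_j(x)}K(x,y)\phi_m(y)\,dx\,dy=\langle \Phi_{jm},K\rangle_{L^2(Y\times Y)} .
\]
Summing over \(j,m\) and using Parseval in \(L^2(Y\times Y)\) gives
\[
\operatorname{tr}(T^*T)=\sum_m\|T\phi_m\|^2=\sum_{m,j}|\langle\phi_j,T\phi_m\rangle|^2=\|K\|_{L^2(Y\times Y)}^2 .
\]
This proves both that \(T\in\mathfrak S_2\) and the norm identity.

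For the converse, suppose \(T\in\mathfrak S_2\). Then \(\sum_{j,m}|\langle\phi_j,T\phi_m\rangle|^2=\operatorname{tr}(T^*T)<\infty\). I therefore define
\[
K:=\sum_{j,m}\langle\phi_j,T\phi_m\rangle\,\Phi_{jm},
\]
which converges in \(L^2(Y\times Y)\) by completeness. Let \(T_K\) be the integral operator with kernel \(K\). By the computation above, \(T_K\) has the same matrix entries as \(T\) in the basis \(\{\phi_j\}\). Both operators are bounded, so \(T=T_K\). This gives the kernel representation for every \(f\in L^2(Y)\), and the norm identity follows from the first direction.

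The only delicate point is the completeness of the product basis in \(L^2(Y\times Y)\), that is, the Fubini/measurability argument. I would handle it as above, or alternatively approximate \(F\) by linear combinations of indicator functions of rectangles \(A\times B\) and expand \(\chi_A\) and \(\chi_B\) in the basis. One should also note that \(\operatorname{tr}(T^*T)=\sum_m\|T\phi_m\|^2\) does not depend on the choice of basis. This is standard, since the double sum \(\sum_{j,m}|\langle\phi_j,T\phi_m\rangle|^2\) is symmetric under \(T\leftrightarrow T^*\).
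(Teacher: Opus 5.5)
Your argument is correct and is essentially the standard proof in the reference the paper cites for this result (Reed--Simon, p.~210); the paper itself gives no proof beyond that citation. That proof likewise uses the product orthonormal basis $\phi_j(x)\overline{\phi_m(y)}$ of $L^2(Y\times Y)$, Parseval for the norm identity, and the $\ell^2$-convergent matrix-coefficient kernel for the converse. The one step to tighten is the completeness argument. First deduce that $g_m(x):=\int_Y F(x,y)\phi_m(y)\,dy$ vanishes a.e.\ from $\langle\phi_j,g_m\rangle=\langle\Phi_{jm},F\rangle=0$ for all $j$. Then use countability of the basis to get a single null set, before concluding $F(x,\cdot)=0$ for a.e.\ $x$.
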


\begin{prop}\label{prop:resolvent_kernel}
Let $\lambda\in \mathbb C\setminus \sigma(-\Delta_\xi)$. Then
\[
G_\lambda^\xi \in L^2(Y\times Y).
\]
Moreover, for every $f\in L^2(Y)$, one has
\begin{equation}\label{equ:resolvent_kernel}
(-\Delta_\xi-\lambda)^{-1}f(x)
=
\int_Y G_\lambda^\xi(x,y)f(y)\,dy
\qquad \text{in }L^2(Y).
\end{equation}
In other words, $G_\lambda^\xi$ is the resolvent kernel of $(-\Delta_\xi-\lambda)^{-1}$.
\end{prop}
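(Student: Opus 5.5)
The plan is to work entirely in the orthonormal basis \(\{e_k\}_{k\in\Lambda^*}\) of \(L^2(Y)\). This reduces the statement to two facts: square-summability of the coefficients \((|k+\xi|^2-\lambda)^{-1}\), and the spectral representation of the resolvent of \(-\Delta_\xi\).

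\emph{Step 1: the kernel lies in \(L^2(Y\times Y)\).} The products \(\{e_k(x)\overline{e_j(y)}\}_{k,j\in\Lambda^*}\) form an orthonormal basis of \(L^2(Y\times Y)\). The series \eqref{equ:Gfourier} uses only the diagonal elements \(k=j\), with coefficients \(c_k:=(|k+\xi|^2-\lambda)^{-1}\). These are well defined because \(\lambda\notin\sigma(-\Delta_\xi)=\{|k+\xi|^2\}\). By Parseval, the partial sums converge in \(L^2(Y\times Y)\) if and only if \(\sum_k|c_k|^2<\infty\), and then \(\|G^\xi_\lambda\|^2_{L^2(Y\times Y)}=\sum_k|c_k|^2\). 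For \(|k|\) large we have \(|k+\xi|^2-|\lambda|\ge \tfrac12|k|^2\), so \(|c_k|\le 2|k|^{-2}\). Since \(\Lambda^*\) is a two-dimensional lattice, \(\#\{k\in\Lambda^*:R\le|k|<R+1\}=O(R)\), and hence \(\sum_{k\neq0}|k|^{-4}<\infty\). The finitely many remaining terms are finite because \(\operatorname{dist}(\lambda,\sigma(-\Delta_\xi))>0\). This gives \(G^\xi_\lambda\in L^2(Y\times Y)\). This summability check is the only genuinely quantitative point of the proof, and it is where dimension two matters. The same argument would fail in dimension four or higher, although dimension three would still work.

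\emph{Step 2: identify the integral operator with the resolvent.} Let \(\mathcal G\) denote the integral operator with kernel \(G^\xi_\lambda\). By the Hilbert--Schmidt theorem quoted above, \(\mathcal G\) is bounded on \(L^2(Y)\). Let \(f\in L^2(Y)\) and write \(f=\sum_j\hat f_je_j\) with \(\hat f_j=\langle e_j,f\rangle\). Integration against \(f\) is a continuous map from \(L^2(Y\times Y)\) to \(L^2(Y)\), so we may integrate the \(L^2\)-convergent series term by term. Orthonormality then gives
\[
\mathcal Gf=\sum_k c_k\hat f_k\,e_k .
\]
On the other hand, \(-\Delta_\xi\) is self-adjoint and diagonal in the complete eigenbasis \(\{e_k\}\) with eigenvalues \(|k+\xi|^2\). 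By the spectral theorem, or by direct verification, \((-\Delta_\xi-\lambda)^{-1}f=\sum_kc_k\hat f_ke_k\). To check this directly:
\begin{itemize}
\item the series lies in the domain \(D(-\Delta_\xi)\), because \(\sum|k+\xi|^4|c_k|^2|\hat f_k|^2<\infty\), since \(|k+\xi|^2|c_k|\) is bounded;
\item applying \(-\Delta_\xi-\lambda\), which is closed, termwise returns \(f\).
\end{itemize}
Comparing the two expansions yields \eqref{equ:resolvent_kernel}.

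The only mild subtlety is justifying the term-by-term integration, which comes from the continuity of the Hilbert--Schmidt map. I do not expect any real obstacle beyond the lattice-sum estimate in Step 1.
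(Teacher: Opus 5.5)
Your proposal is correct and takes essentially the same route as the paper. Orthonormality of the diagonal products $e_k(x)\overline{e_k(y)}$, together with square-summability of $(|k+\xi|^2-\lambda)^{-1}$ in two dimensions, gives $L^2(Y\times Y)$ convergence, and continuity of integration against $f$ (Cauchy--Schwarz) carries the partial-sum identity to the limit; you additionally make explicit the lattice-point count behind the summability and the identification of $\sum_k c_k\hat f_k e_k$ with the resolvent via closedness of $-\Delta_\xi$, both of which the paper leaves implicit.
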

\begin{proof}
The functions \(e_k(x)\overline{e_k(y)}\), \(k\in\Lambda^*\), are
orthonormal in \(L^2(Y\times Y)\). Since
\[
\sum_{k\in\Lambda^*}\frac{1}{\bigl||k+\xi|^2-\lambda\bigr|^2}<\infty
\]
in two dimensions, the series defining \(G_\lambda^\xi\) converges in
\(L^2(Y\times Y)\). For each finite partial sum, integration against
\(f\) gives the corresponding partial sum of the Fourier expansion of
\((-\Delta_\xi-\lambda)^{-1}f\). By the Cauchy--Schwarz inequality, the
\(L^2(Y\times Y)\) convergence of the kernels implies convergence of
these integrals in \(L^2(Y)\), which proves the claimed kernel
representation.
\end{proof}

For the integral formulation below, take \(\eta\) sufficiently small
that \(\overline{D_\eta}\subset\operatorname{int}Y\).
Set \(\lambda=k^2\) in the Helmholtz equation. On the \(\xi\)-fiber, the
identity \(n_\eta|_Y=1+\tau\chi_{D_\eta}\) then gives
\[
(-\Delta_\xi-\lambda)u
=\tau\lambda\chi_{D_\eta}u.
\]
For \(\lambda\notin\sigma(-\Delta_\xi)\), the free resolvent therefore
reduces the eigenvalue problem to an integral equation on \(D_\eta\).
The following theorem states this equivalence precisely.

\begin{thm}\label{thm:LSequation}
Fix \(\xi\in B\) and \(\lambda\notin\sigma(-\Delta_\xi)\). Then
\(\lambda\in\sigma(P_\xi)\) if and only if the fiber
Lippmann--Schwinger equation
\begin{equation}\label{equ:LS_equation}
    \big(I-\tau\lambda\cT_{D_\eta}^{\xi,\lambda}\big)[u](x)=0,\quad\text{for a.e. }x\in D_\eta,
\end{equation}
admits a nontrivial solution in \(L^2(D_\eta)\), where
\[
\cT_{D_\eta}^{\xi,\lambda}[u](x)
:=\int_{D_\eta}G_\lambda^\xi(x,y)u(y)\,dy.
\]
By Proposition~\ref{prop:resolvent_kernel} and the kernel criterion,
\(\cT_{D_\eta}^{\xi,\lambda}\) is Hilbert--Schmidt on
\(L^2(D_\eta)\). Equivalently, \(\lambda\) is a characteristic value
of the operator function
\[
\cA(\mu,\xi;\tau,\eta):=I-\tau\mu\cT_{D_\eta}^{\xi,\mu}.
\]
\end{thm}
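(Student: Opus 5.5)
The plan is to prove the two implications separately. The only tools needed are the resolvent identity of Proposition~\ref{prop:resolvent_kernel} and the fact that \(\chi_{D_\eta}u\) is supported in \(D_\eta\).

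For the direction from spectrum to the integral equation, suppose \(\lambda\in\sigma(P_\xi)\) with eigenfunction \(u\in H^1_\xi(Y)\), \(u\neq0\). The weak eigenvalue equation reads
\[
\int_Y \overline{\nabla v}\cdot\nabla u\,dx=\lambda\int_Y n_\eta \bar v u\,dx
\qquad\text{for all } v\in H^1_\xi(Y).
\]
Rewriting it as
\[
\int_Y \overline{\nabla v}\cdot\nabla u-\lambda\bar v u\,dx=\tau\lambda\int_{D_\eta}\bar v u\,dx,
\]
we see that \(u\) lies in the form domain of \(-\Delta_\xi\) and satisfies \((-\Delta_\xi-\lambda)u=\tau\lambda\chi_{D_\eta}u\) in the weak sense. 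Since \(\lambda\notin\sigma(-\Delta_\xi)\), we may apply the resolvent and use \eqref{equ:resolvent_kernel} to get
\[
u=\tau\lambda(-\Delta_\xi-\lambda)^{-1}(\chi_{D_\eta}u)
=\tau\lambda\int_{D_\eta}G^\xi_\lambda(\cdot,y)u(y)\,dy
\]
a.e. in \(Y\). Restricting to \(D_\eta\) gives \eqref{equ:LS_equation} for \(w:=u|_{D_\eta}\). It remains to show \(w\neq0\). If \(w=0\), then \((-\Delta_\xi-\lambda)u=0\), and since \(\lambda\) is not an eigenvalue of \(-\Delta_\xi\), this forces \(u=0\), a contradiction. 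We also note that \(\lambda\neq0\) whenever \(w\neq0\), since otherwise the right-hand side vanishes.

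For the converse, let \(w\in L^2(D_\eta)\setminus\{0\}\) solve \eqref{equ:LS_equation}. Extend it by defining
\[
u:=\tau\lambda(-\Delta_\xi-\lambda)^{-1}(\chi_{D_\eta}w)\in D(-\Delta_\xi)\subset H^1_\xi(Y).
\]
By \eqref{equ:LS_equation}, \(u|_{D_\eta}=w\), so \(\chi_{D_\eta}u=\chi_{D_\eta}w\). Therefore \((-\Delta_\xi-\lambda)u=\tau\lambda\chi_{D_\eta}u\), which is equivalent to \(-\Delta_\xi u=\lambda n_\eta u\). Pairing with test functions \(v\in H^1_\xi(Y)\) yields exactly the form identity defining \(P_\xi\) on \(L^2_{n_\eta,\xi}(Y)\), so \(u\) is an eigenfunction. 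It is nonzero because its restriction to \(D_\eta\) is \(w\neq0\).

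The Hilbert--Schmidt claim follows from the kernel criterion. The kernel of \(\cT^{\xi,\lambda}_{D_\eta}\) is the restriction of \(G^\xi_\lambda\) to \(D_\eta\times D_\eta\), which lies in \(L^2\) by Proposition~\ref{prop:resolvent_kernel}. The characteristic-value reformulation is then just a restatement: \(\ker\cA(\lambda,\xi;\tau,\eta)\neq\{0\}\).

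The main point requiring care is the identification of weak solutions. One must check that the form equation for \(P_\xi\), posed in the weighted space, coincides with the operator equation for \(-\Delta_\xi\), posed in the unweighted \(L^2(Y)\). This holds because the two spaces share the form domain \(H^1_\xi(Y)\) and their inner products differ only by the bounded weight \(n_\eta\). Everything else is routine.
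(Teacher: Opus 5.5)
Your proposal is correct and follows essentially the same route as the paper. In one direction, you pass from the weak eigenvalue equation to the resolvent representation and then restrict to \(D_\eta\), with nonvanishing on \(D_\eta\) coming from injectivity of \(-\Delta_\xi-\lambda\). Conversely, you extend the solution by the resolvent and identify \(-\Delta_\xi u=\lambda n_\eta u\) with the form equation for \(P_\xi\). The only ingredient you leave implicit, which the paper states explicitly, is that \(P_\xi\) has compact resolvent; this is what guarantees that \(\lambda\in\sigma(P_\xi)\) comes with an eigenfunction.
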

\begin{proof}
The form definitions and \(1\leq n_\eta\leq1+\tau\) imply
\[
D(P_\xi)=D(-\Delta_\xi),\qquad -\Delta_\xi=n_\eta P_\xi.
\]
Suppose first that \(\lambda\in\sigma(P_\xi)\). Since \(P_\xi\) has
compact resolvent, there is a nonzero \(u\in D(P_\xi)\) such that
\[
P_\xi u=\lambda u.
\]
By the definition of \(P_\xi\),
\[
\int_Y \overline{\nabla v}\cdot\nabla u\,dx
=
\lambda\int_Y(1+\tau\chi_{D_\eta})\overline v u\,dx
\qquad \forall v\in H_\xi^1(Y).
\]
Thus, in the distributional sense,
\begin{align} \label{equ:LS1}
(-\Delta_\xi-\lambda)u=\lambda\tau\chi_{D_\eta}u.
\end{align}

Since \(\lambda\notin\sigma(-\Delta_\xi)\), applying the free resolvent
and using \eqref{equ:resolvent_kernel} gives
\begin{equation}\label{equ:LS2}
u(x)
=
\lambda\tau\int_{D_\eta} G_\lambda^\xi(x,y)u(y)\,dy,
\qquad\text{for a.e. }x\in Y.
\end{equation}
The restriction of \(u\) to \(D_\eta\) cannot vanish, since otherwise
\eqref{equ:LS2} would give \(u=0\). Restricting \eqref{equ:LS2} to
\(D_\eta\) proves one implication.

Conversely, suppose \(0\neq u\in L^2(D_\eta)\) solves
\eqref{equ:LS_equation}. Define
\begin{equation}\label{equ:LS3}
U(x):=\lambda\tau\int_{D_\eta} G_\lambda^\xi(x,y)u(y)\,dy,
\qquad x\in Y.
\end{equation}
Since \(U=u\) almost everywhere on \(D_\eta\), it is nonzero. The
resolvent representation gives
\[
U=\lambda\tau(-\Delta_\xi-\lambda)^{-1}(\chi_{D_\eta}U).
\]
Hence \(U\in D(-\Delta_\xi)\) and
\((-\Delta_\xi-\lambda)U=\lambda\tau\chi_{D_\eta}U\).
This is the fiber eigenvalue equation \(P_\xi U=\lambda U\), so
\(\lambda\in\sigma(P_\xi)\).
Restriction to \(D_\eta\) and the extension \eqref{equ:LS3}
are inverse linear maps between the two eigenspaces.
\end{proof}

\section{Small-inclusion asymptotics}

\subsection{Two characteristic values near the leading scale}
Let \(\widehat\lambda:=\tau\lambda\) and \(\epsilon:=1/\tau\).
Define the unitary rescaling
\(U_\eta:L^2(D_\eta)\to L^2(D)\oplus L^2(D)\) by
\(U_\eta f=(f_1,f_2)\), where
\[
f_1(x)=\eta f(z_1+\eta x),
\qquad
f_2(x)=\eta f(z_2-\eta x).
\]
The factor \(\eta\) accounts for the two-dimensional change of
variables. Then we obtain
\begin{equation}\label{equ:LS_operator_matrix}
\begin{aligned}
U_\eta\cA(\epsilon \widehat\lambda,\xi;\epsilon^{-1},\eta)U_\eta^{-1}&=I-\widehat\lambda\eta^2\begin{pmatrix}
    \cT_D^{\xi,\epsilon \widehat\lambda,\eta}&\cR_{+}\\
    \cR_{-} & \cT_D^{-\xi,\epsilon \widehat\lambda,\eta}
\end{pmatrix}\\
&=I-\widehat\lambda\eta^2\begin{pmatrix}
    \cT_D^{\xi,\epsilon \widehat\lambda,\eta}&\\
     & \cT_D^{-\xi,\epsilon \widehat\lambda,\eta}
\end{pmatrix}+O(|\widehat\lambda|\eta^2),
\end{aligned}
\end{equation}
where
\begin{align*}
    \cT_D^{\xi,\epsilon \widehat\lambda,\eta}[f](x)&:=\int_D G^{\xi,\epsilon \widehat\lambda,\eta}(x-y)f(y)\,dy,\\
    G^{\xi,\lambda,\eta}(z)&:=\frac{1}{|Y|}\sum_{k\in\Lambda^*}
\frac{e^{i(k+\xi)\cdot \eta z}}{|k+\xi|^2-\lambda},\\
\cR_{+}[f](x)&:=\int_D
G^{\xi}_{\epsilon\widehat\lambda}
\bigl(z_1+\eta x,z_2-\eta y\bigr)f(y)\,dy,\\
\cR_{-}[f](x)&:=\int_D
G^{\xi}_{\epsilon\widehat\lambda}
\bigl(z_2-\eta x,z_1+\eta y\bigr)f(y)\,dy.
\end{align*}
In the kernel estimates below, we restrict the spectral parameter to
\( |\epsilon\widehat\lambda|\le d_\Xi^2/2\), where
\(d_\Xi:=\inf_{\xi\in\Xi,\,k\in\Lambda^*}|k+\xi|>0\) for a compact
set \(\Xi\). This condition holds near the leading resonant scale when
\(\epsilon/(\eta^2|\log\eta|)\) is sufficiently small.
For \(\xi\) in a compact set separated from \(\Lambda^*\), the
zero-frequency Green function has a logarithmic singularity at the
origin and a smooth regular part. Let \(R_\xi\) denote the value of
that regular part at the origin. For real \(\xi\), the Fourier
representation gives
\(G_0^{-\xi}(z,0)=G_0^\xi(-z,0)
=\overline{G_0^\xi(z,0)}\). Since the logarithmic singularity is
real and even, \(R_\xi\) is real and \(R_{-\xi}=R_\xi\).
Since the regular part is smooth and the change from zero to
\(\epsilon\widehat\lambda\) in the spectral parameter contributes
\(O(\epsilon|\widehat\lambda|)\), we have, for
\(z\in(D-D)\setminus\{0\}\),
\[
G^{\xi,\epsilon\widehat\lambda,\eta}(z)
=
\frac{|\log\eta|}{2\pi}
-\frac{\log|z|}{2\pi}
+R_\xi
+O(\eta+\epsilon|\widehat\lambda|).
\]
The remainder is uniform for \(z\in D-D\) after the logarithmic
singularity is removed, and for \(\xi\) in such compact sets. Hence,
in operator norm,
\[
\cT_D^{\xi,\epsilon \widehat\lambda,\eta}
=\frac{|\log\eta|}{2\pi}\cP+\cK_{\xi}
+O(\eta+\epsilon|\widehat\lambda|),
\]
where
\[
\cP f:=\mathbf 1_D\int_D f(y)\,dy,\qquad
\cK_{\xi}f(x):=
\int_D\left(-\frac{\log|x-y|}{2\pi}+R_{\xi}\right)f(y)\,dy.
\]
In particular, \(\cK_{-\xi}=\cK_\xi\).
Because the two inclusion centers are distinct modulo \(\Lambda\), the off-diagonal
Green kernels are smooth near \((z_1,z_2)\) and \((z_2,z_1)\).
Consequently, also in operator norm,
\[
\cR_+
=G_0^\xi(z_1,z_2)\cP+O(\eta+\epsilon|\widehat\lambda|),
\qquad
\cR_-
=G_0^\xi(z_2,z_1)\cP+O(\eta+\epsilon|\widehat\lambda|).
\]
In particular, \eqref{equ:LS_operator_matrix} gives
\begin{equation}\label{equ:LS_operator_matrix2}
    U_\eta\cA(\epsilon\widehat\lambda,\xi;\epsilon^{-1},\eta)U_\eta^{-1}=I+\frac{1}{2\pi}\widehat\lambda\eta^2\log\eta\begin{pmatrix}
    \cP&\\
     & \cP
\end{pmatrix}+O(|\widehat\lambda|\eta^2)+O(\epsilon|\widehat\lambda|^2\eta^2).
\end{equation}
We regard quasi-momenta as points of \(\mathbb R^2\), with
\(\Lambda^*\)-periodicity, so that neighborhoods of a Brillouin-zone
vertex need not lie inside the chosen hexagonal cell \(B\).
\begin{thm}\label{thm:two_characteristic_values}
Let $\Xi\Subset\mathbb R^2$ satisfy
\[
\inf_{\xi\in\Xi}\inf_{k\in\Lambda^*}|k+\xi|>0,
\]
and set
\[
\epsilon=\frac1\tau,
\qquad
\widehat{\mathcal A}(\widehat\lambda,\xi;\epsilon,\eta)
:=
\mathcal A(\epsilon\widehat\lambda,\xi;\epsilon^{-1},\eta).
\]
If
\[
\eta\to0,
\qquad
\frac{\epsilon}{\eta^2|\log\eta|}\to0,
\]
then, for every $\xi\in\Xi$, there exist two characteristic values
$\widehat\lambda_1(\xi)$ and $\widehat\lambda_2(\xi)$, counted with
algebraic multiplicity, near
\[
\frac{2\pi}{|D|\eta^2|\log\eta|},
\]
such that
\[
\widehat\lambda_{1,2}(\xi)
=
\frac{2\pi}{|D|\eta^2|\log\eta|}
\left[
1
+
O\left(\frac1{|\log\eta|}\right)
+
O\left(
\frac{\epsilon}{\eta^2|\log\eta|^2}
\right)
\right].
\]
The estimates are uniform with respect to $\xi\in\Xi$.

\end{thm}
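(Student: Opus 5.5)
We will locate the two characteristic values with the generalized Rouché theorem of Gohberg--Sigal, applied to the rescaled pencil \(\widehat\cA(\widehat\lambda,\xi;\epsilon,\eta)\) on a small disk around \(\widehat\lambda_0:=2\pi/(|D|\eta^2|\log\eta|)\). Throughout we work on \(L^2(D)\oplus L^2(D)\), conjugated by \(U_\eta\). Set \(L:=|\log\eta|\) and \(\widehat\lambda=\widehat\lambda_0(1+w)\), where \(w\) ranges over a disk \(|w|\le r\) whose radius \(r\) will be chosen as a large constant times \(1/L+\epsilon/(\eta^2L^2)\).

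\textbf{Step 1: leading operator.} On this disk \(|\widehat\lambda|\eta^2\sim 1/L\) and \(\epsilon|\widehat\lambda|\sim\epsilon/(\eta^2L)\to0\), so the constraint \(|\epsilon\widehat\lambda|\le d_\Xi^2/2\) holds. Expansion \eqref{equ:LS_operator_matrix2} then gives
\[
U_\eta\widehat\cA U_\eta^{-1}=\cA_0(w)+\cE,
\qquad
\cA_0(w):=I-\frac{1+w}{|D|}\begin{pmatrix}\cP&\\&\cP\end{pmatrix},
\]
with \(\|\cE\|=O(1/L)+O(\epsilon/(\eta^2L^2))\). Here we used \(|\widehat\lambda|\eta^2=O(1/L)\) and \(\epsilon|\widehat\lambda|^2\eta^2=O(\epsilon/(\eta^2L^2))\). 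The term \(\widehat\lambda\eta^2 L\,\cP/(2\pi)\) appears with the sign fixed by \(\log\eta=-L\). Since \(\cP/|D|\) is the orthogonal projection onto the constants on \(D\), the operator \(\cA_0(w)\) acts as the identity on the complement of \(V:=\mathrm{span}\{\mathbf 1_D\}^{\oplus2}\) and as \(-w\) on \(V\). Hence \(\cA_0\) is an analytic Fredholm family of index zero whose only characteristic value in the disk is \(w=0\), with null multiplicity \(2\). Moreover \(\|\cA_0(w)^{-1}\|\le \max(1,1/|w|)\), so on the boundary circle \(|w|=r\) we have \(\|\cA_0^{-1}\|\le 1/r\).

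\textbf{Step 2: Rouché.} The bound on \(\cE\) must be uniform in \(\widehat\lambda\) on the disk and in \(\xi\in\Xi\); this follows from the uniform kernel expansions stated before the theorem, namely the regular part \(R_\xi\) and the off-diagonal smoothness. Choose \(r=C(1/L+\epsilon/(\eta^2L^2))\) with \(C\) large, so that \(\|\cA_0^{-1}\cE\|<1\) on \(|w|=r\); this also requires \(r\to0\), which holds under the hypotheses. The Gohberg--Sigal generalized Rouché theorem then shows that \(\widehat\cA\) has exactly two characteristic values, counted with multiplicity, in \(|w|<r\). This is precisely the claimed expansion. Analyticity of the pencil in \(\widehat\lambda\) holds because \(G^\xi_\lambda\) depends analytically on \(\lambda\) away from \(\sigma(-\Delta_\xi)\), and the Hilbert--Schmidt bounds make the perturbation compact. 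By Theorem~\ref{thm:LSequation}, these characteristic values correspond to Bloch eigenvalues.

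\textbf{Main obstacle.} The delicate point is making the remainder in \eqref{equ:LS_operator_matrix2} honest and uniform. We must check that the \(O(\eta+\epsilon|\widehat\lambda|)\) errors in the kernel expansions are operator-norm bounds independent of \(\xi\in\Xi\) and of \(\widehat\lambda\) in the complex disk, not only for real values. We must also check that the \(\lambda\)-derivative of the regular part of \(G^\xi_\lambda\) stays bounded for \(|\lambda|\le d_\Xi^2/2\). We would obtain the latter by splitting \(G_\lambda^\xi=G_0^\xi+\lambda\sum_k e_k\overline{e_k}/\bigl((|k+\xi|^2-\lambda)|k+\xi|^2\bigr)\). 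The second sum converges absolutely and uniformly, which gives a smooth kernel bounded by \(C|\lambda|\). The logarithmic part of \(G_0^\xi\) is handled by the standard comparison with \(-\log|x|/(2\pi)\), whose rescaling produces the \(L\,\cP/(2\pi)\) term together with a bounded operator \(\cK_\xi\). Once these bounds are in place, the Rouché step is routine.
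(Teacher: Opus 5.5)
Your proposal is correct and follows essentially the same route as the paper. Both use the leading pencil $I-\nu\Pi$ (your $w=\nu-1$), the bound $\|\mathcal A_0^{-1}\|\le 1/(M\delta_{\epsilon,\eta})$ on a circle of radius $M\delta_{\epsilon,\eta}$ with $\delta_{\epsilon,\eta}=1/L+\epsilon/(\eta^2L^2)$, and the Gohberg--Sigal Rouch\'e theorem to count two characteristic values. Your extra remarks on uniformity for complex $\widehat\lambda$, via the splitting $G^\xi_\lambda-G^\xi_0=O(|\lambda|)$, supply detail that the paper takes from its preceding kernel expansions.
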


\begin{proof}
Define
\[
\Pi
:=
\frac1{|D|}
\begin{pmatrix}
\mathcal P&0\\
0&\mathcal P
\end{pmatrix},\quad 
\nu
:=
\frac{|D|}{2\pi}
\widehat\lambda\eta^2|\log\eta|.
\]
Therefore, by \eqref{equ:LS_operator_matrix2},
\[
U_\eta
\widehat{\mathcal A}
(\widehat\lambda,\xi;\epsilon,\eta)
U_\eta^{-1}
=
I-\nu\Pi
+
\mathcal E_{\epsilon,\eta}^{\xi}(\nu),
\]
where, uniformly for \(\xi\in\Xi\) and for \(\nu\) in a fixed
bounded neighborhood of \(1\),
\begin{align*}
\left\|
\mathcal E_{\epsilon,\eta}^{\xi}(\nu)
\right\|
&\leq
C\left(
|\widehat\lambda|\eta^2
+
\epsilon|\widehat\lambda|^2\eta^2
\right)\leq
C'\left(
\frac1{|\log\eta|}
+
\frac{\epsilon}
{\eta^2|\log\eta|^2}
\right).
\end{align*}
Set
\[
\delta_{\epsilon,\eta}
:=
\frac1{|\log\eta|}
+
\frac{\epsilon}
{\eta^2|\log\eta|^2}.
\]
By assumption,
\[
\delta_{\epsilon,\eta}\longrightarrow0.
\]

Consider the principal operator function
\[
\mathcal A_0(\nu):=I-\nu\Pi.
\]
For \(\nu\neq1\),
\[
\mathcal A_0(\nu)^{-1}
=
(I-\Pi)+\frac1{1-\nu}\Pi.
\]
Choose \(M>0\) sufficiently large and let
\[
\Gamma_{\epsilon,\eta}
:=
\left\{
\nu\in\mathbb C:
|\nu-1|=M\delta_{\epsilon,\eta}
\right\}.
\]
For sufficiently small \(\epsilon\) and \(\eta\), this circle is
contained in the neighborhood in which the above expansion holds.
On \(\Gamma_{\epsilon,\eta}\),
\[
\left\|\mathcal A_0(\nu)^{-1}\right\|
\leq
\frac{1}{M\delta_{\epsilon,\eta}},
\qquad
\left\|
\mathcal A_0(\nu)^{-1}
\mathcal E_{\epsilon,\eta}^{\xi}(\nu)
\right\|
\leq
\frac{C'}{M}.
\]
Taking \(M\) sufficiently large makes the second norm strictly less
than one for \(\nu\in\Gamma_{\epsilon,\eta}\), uniformly for
\(\xi\in\Xi\).

By the operator-valued Rouch\'e theorem of Gohberg--Sigal \cite[Chapter~1]{ammari2018mathematical},
\(\mathcal A_0\) and
\(\mathcal A_0+\mathcal E_{\epsilon,\eta}^{\xi}\) have the same total
algebraic multiplicity of characteristic values
inside \(\Gamma_{\epsilon,\eta}\). Since \(\mathcal A_0\) has a
characteristic value of algebraic multiplicity two at \(\nu=1\),
there exist two characteristic values \(\nu_1^\xi\) and \(\nu_2^\xi\)
such that \(|\nu_j^\xi-1|\leq M\delta_{\epsilon,\eta}\) for
\(j=1,2\).
Therefore,
\[
\nu_{1,2}^\xi
=
1
+
O\left(\frac1{|\log\eta|}\right)
+
O\left(
\frac{\epsilon}
{\eta^2|\log\eta|^2}
\right),
\]
uniformly for \(\xi\in\Xi\).

Finally, substituting
\(\widehat\lambda=2\pi\nu/(|D|\eta^2|\log\eta|)\), we obtain
\[
\widehat\lambda_{1,2}(\xi)
=
\frac{2\pi}{|D|\eta^2|\log\eta|}
\left[
1
+
O\left(\frac1{|\log\eta|}\right)
+
O\left(
\frac{\epsilon}
{\eta^2|\log\eta|^2}
\right)
\right].
\]
Since conjugation by the unitary operator \(U_\eta\) preserves
characteristic values and their algebraic multiplicities, the same
conclusion holds for
\(\widehat{\mathcal A}
(\widehat\lambda,\xi;\epsilon,\eta)\).
\end{proof}
\begin{prop}[Two characteristic values for a general quasi-momentum]
\label{prop:general-xi-effective}
Let \(\Xi\Subset\mathbb R^2\) satisfy
\(\inf_{\xi\in\Xi}\inf_{k\in\Lambda^*}|k+\xi|>0\), and set
\[
L:=|\log\eta|,
\qquad
\delta_{\epsilon,\eta}
:=
\frac1L+\frac{\epsilon}{\eta^2L^2}.
\]
Assume that \(\eta\to0\) and \(\epsilon/(\eta^2L)\to0\).
Then, uniformly for \(\xi\in\Xi\), the two characteristic values given by
Theorem~\ref{thm:two_characteristic_values}, counted with algebraic
multiplicity and labeled so that \(\widehat\lambda_-(\xi)\leq
\widehat\lambda_+(\xi)\), satisfy
\begin{equation}\label{equ:general-xi-refined-eigenvalues}
\widehat\lambda_\pm(\xi)
=
\frac{2\pi}{|D|\eta^2L}
\left[
1-\frac{2\pi\mu_\mp(\xi)}{|D|L}
+
O\left(
\frac1{L^2}
+\frac{\epsilon}{\eta^2L^2}
\right)
\right],
\end{equation}
where
\begin{equation}\label{equ:mu-pm-xi}
\mu_\pm(\xi)
=\kappa_\xi\pm|g(\xi)|,
\end{equation}
and
\begin{align}
\kappa_\xi
&:=
\frac1{|D|}
\int_D\int_D
\left[
-\frac1{2\pi}\log|x-y|+R_\xi
\right]dx\,dy,
\label{equ:kappa-xi}
\\
g(\xi)
&:=|D|G_0^\xi(z_1,z_2).
\label{equ:g-xi}
\end{align}
In particular, \(\kappa_{-\xi}=\kappa_\xi\).
Moreover, the associated normalized characteristic functions are,
after applying \(U_\eta\), within
\(O(\delta_{\epsilon,\eta})\) of the two-dimensional constant-mode
space in \(L^2(D)\oplus L^2(D)\).
\end{prop}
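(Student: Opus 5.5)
The plan is a Lyapunov--Schmidt (Schur-complement) reduction of the rescaled pencil onto the two-dimensional range of \(\Pi\), keeping the next-order term in the operator expansion instead of absorbing it into \(\mathcal E\). First I record the refined form of \eqref{equ:LS_operator_matrix}. Using the kernel expansions of \(\cT_D^{\pm\xi,\epsilon\widehat\lambda,\eta}\) and \(\cR_\pm\), I write
\[
U_\eta\widehat{\mathcal A}U_\eta^{-1}
=I-\nu\Pi-\frac{2\pi\nu}{|D|L}\,\mathcal M_\xi+\mathcal F(\nu),
\qquad
\mathcal M_\xi:=\begin{pmatrix}\cK_\xi & G_0^\xi(z_1,z_2)\cP\\ G_0^\xi(z_2,z_1)\cP & \cK_{\xi}\end{pmatrix},
\]
where I have used \(\cK_{-\xi}=\cK_\xi\) and \(\widehat\lambda\eta^2=2\pi\nu/(|D|L)\). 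The remainder satisfies \(\|\mathcal F(\nu)\|\le C(L^{-1}(\eta+\epsilon|\widehat\lambda|))\le C(\eta/L+\epsilon/(\eta^2L^2))\), uniformly for \(\nu\) near \(1\). This bound is \(O(L^{-2}+\epsilon/(\eta^2L^2))\), since \(\eta\ll L^{-1}\).

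Next I split \(H=L^2(D)^2=\operatorname{Ran}\Pi\oplus\operatorname{Ran}(I-\Pi)\) and write the pencil in block form. On \(\operatorname{Ran}(I-\Pi)\), the \((2,2)\) block is \(I+O(L^{-1})\), which is invertible uniformly. The Schur complement on the constant-mode space \(\operatorname{span}\{(\mathbf 1_D,0),(0,\mathbf 1_D)\}/\sqrt{|D|}\) is then
\[
S(\nu)=(1-\nu)I_2-\frac{2\pi\nu}{|D|L}\,M(\xi)+O\!\left(L^{-2}+\frac{\epsilon}{\eta^2L^2}\right).
\]
Here \(M(\xi)\) is the compression of \(\mathcal M_\xi\). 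The off-diagonal corrections \(\Pi\mathcal M(I-\Pi)\mathcal M\Pi\) are of size \(L^{-2}\), so they fall into the remainder. The compression itself is explicit:
\[
M(\xi)=\begin{pmatrix}\kappa_\xi & g(\xi)\\ \overline{g(\xi)} & \kappa_\xi\end{pmatrix}.
\]
The diagonal entries are \(|D|^{-1}\langle\cK_\xi\mathbf 1_D,\mathbf 1_D\rangle=\kappa_\xi\) by \eqref{equ:kappa-xi}. The off-diagonal entries are \(G_0^\xi(z_1,z_2)|D|=g(\xi)\) and \(G_0^\xi(z_2,z_1)|D|\); the latter equals \(\overline{g(\xi)}\) by Hermitian symmetry of the real-\(\xi\) Green function. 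The characteristic values of the full pencil in the small disk coincide, with algebraic multiplicities, with the zeros of \(\det S(\nu)\). This is the standard equivalence for the Schur complement of an analytic Fredholm pencil.

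Since \(M(\xi)\) is Hermitian, its eigenvalues are exactly \(\mu_\pm(\xi)=\kappa_\xi\pm|g(\xi)|\). Hence
\[
\det S(\nu)=\prod_\pm\Bigl(1-\nu-\tfrac{2\pi\nu}{|D|L}\mu_\pm\Bigr)+O\bigl(\delta'\,|1-\nu|+\delta'/L+\delta'^2\bigr),
\qquad
\delta':=L^{-2}+\frac{\epsilon}{\eta^2L^2}.
\]
Near \(\nu=1\) each factor is \(1-\nu-2\pi\mu_\pm/(|D|L)+O(L^{-2})\). I then locate the zeros by Rouché on two small disks. When the gap \(4\pi|g|/(|D|L)\) dominates \(\delta'\), I use separate disks of radius \(C\delta'\) around \(1-2\pi\mu_\pm/(|D|L)\). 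When it does not, I use one disk and Hermitian perturbation, or simply note that both points are within \(O(\delta')\) of each other, so the ordering claim is trivial.

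To avoid case analysis, I expect it is cleaner to argue more directly. For real \(\xi\) the characteristic values are real eigenvalues of the self-adjoint \(P_\xi\), so \(\nu_\pm\) are real, and \(S(\nu)\) can be symmetrized to a Hermitian matrix \(H(\nu)\) plus an error. Weyl's inequality applied to the eigenvalue equation \(1-\nu\in\sigma(\ldots)\) then gives \(\nu_\pm=1-2\pi\mu_\pm/(|D|L)+O(\delta')\). The larger \(\mu\) gives the smaller \(\nu\), which yields the index swap \(\widehat\lambda_\pm\leftrightarrow\mu_\mp\). Multiplying by \(2\pi/(|D|\eta^2L)\) gives \eqref{equ:general-xi-refined-eigenvalues}.

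The eigenfunction statement follows from the block reduction: the \((I-\Pi)\)-component of a normalized kernel vector equals \(-(\text{(2,2) block})^{-1}(\text{(2,1) block})\) applied to its \(\Pi\)-component, which is \(O(\delta_{\epsilon,\eta})\). Finally, \(\kappa_{-\xi}=\kappa_\xi\) is immediate from \(R_{-\xi}=R_\xi\).

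I expect the main obstacle to be the uniform bookkeeping of the remainder. Two things must hold: the \(\nu\)-dependence of the correction terms must be genuinely of lower order, and the \(O(\eta+\epsilon|\widehat\lambda|)\) kernel errors multiplied by \(\widehat\lambda\eta^2\) must be of order \(L^{-2}+\epsilon/(\eta^2L^2)\) rather than only \(L^{-1}\). I would verify both by redoing the operator-norm estimates with the factor \(\widehat\lambda\eta^2\sim L^{-1}\) made explicit. The second delicate point is using the self-adjointness (realness) of the characteristic values to pass the \(O(\delta')\) matrix perturbation to the eigenvalues without losing a square root near the degenerate case \(g(\xi)\approx0\).
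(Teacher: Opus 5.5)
Your proposal is correct and follows essentially the same route as the paper: a Schur-complement reduction onto the two constant modes, identification of the compression with \(\mathbb K(\xi)\) and of the cross terms as \(O(L^{-2})\), then the exact Hermitian structure of the Schur complement at real \(\widehat\lambda\), together with the two-root count of Theorem~\ref{thm:two_characteristic_values}, to place the roots within \(O(L^{-2}+\epsilon/(\eta^2L^2))\) without a square-root loss, and finally the bound on \(f_Q=-A_{QQ}^{-1}A_{Q\Pi}f_\Pi\) for the eigenfunctions. The one step you leave implicit is that each predicted location actually carries a root, which Weyl's inequality alone does not give; the paper secures this pairing with \(\mu_\mp\) through strict monotonicity of the Schur complement in \(\widehat\lambda\) plus min--max, which is cleaner than your Rouch\'e case split, since the determinant error \(O(\delta'/L)\) you wrote is too crude in the regime \(\delta'\ll|g(\xi)|/L\ll 1/L\).
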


\begin{proof}
Let
\[
u_0:=|D|^{-1/2}\mathbf 1_D,
\qquad
e_1:=\begin{pmatrix}u_0\\0\end{pmatrix},
\qquad
e_2:=\begin{pmatrix}0\\u_0\end{pmatrix},
\]
and let \(\Pi\) be the orthogonal projection onto
\(\mathcal E_0:=\operatorname{span}\{e_1,e_2\}\).
Set \(Q:=I-\Pi\).  By Theorem~\ref{thm:two_characteristic_values}, the
two characteristic values under consideration satisfy
\(\widehat\lambda\eta^2=O(L^{-1})\).

Following \eqref{equ:LS_operator_matrix}, define the scaled
two-inclusion integral operator by
\[
\mathbb T_\eta^\xi(\epsilon\widehat\lambda)
:=
\begin{pmatrix}
\mathcal T_D^{\xi,\epsilon\widehat\lambda,\eta}
&\mathcal R_+
\\
\mathcal R_-
&\mathcal T_D^{-\xi,\epsilon\widehat\lambda,\eta}
\end{pmatrix}.
\]
Thus the conjugated characteristic operator is
\(U_\eta\widehat{\mathcal A}
(\widehat\lambda,\xi;\epsilon,\eta)U_\eta^{-1}
=I-\widehat\lambda\eta^2\mathbb T_\eta^\xi(\epsilon\widehat\lambda)\).

The diagonal and off-diagonal kernel expansions give
\begin{equation}\label{equ:T-effective-simple}
\mathbb T_\eta^\xi(\epsilon\widehat\lambda)
=
\frac{|D|L}{2\pi}\Pi+\mathbb B(\xi)
+
O\left(
\eta+\frac{\epsilon}{\eta^2L}
\right),
\end{equation}
where
\[
\mathbb B(\xi)
:=
\begin{pmatrix}
\mathcal K_\xi
&G_0^\xi(z_1,z_2)\mathcal P
\\
G_0^\xi(z_2,z_1)\mathcal P
&\mathcal K_\xi
\end{pmatrix}.
\]
Consequently, in the basis \(\{e_1,e_2\}\),
\begin{equation}\label{equ:B-xi}
\Pi\mathbb B(\xi)\Pi
=
\mathbb K(\xi)
:=
\begin{pmatrix}
\kappa_\xi&g(\xi)\\
\overline{g(\xi)}&\kappa_\xi
\end{pmatrix}.
\end{equation}

Write the characteristic operator with respect to
\(\mathcal E_0\oplus\operatorname{Ran}Q\) as
\[
\mathbb A(\widehat\lambda,\xi)
=I-\widehat\lambda\eta^2\mathbb T_\eta^\xi(\epsilon\widehat\lambda)
=
\begin{pmatrix}
A_{\Pi\Pi}&A_{\Pi Q}\\
A_{Q\Pi}&A_{QQ}
\end{pmatrix}.
\]
Since \(\widehat\lambda\eta^2=O(L^{-1})\), formula
\eqref{equ:T-effective-simple} implies
\[
A_{QQ}=Q+O(L^{-1}),
\qquad
A_{QQ}^{-1}=Q+O(L^{-1}).
\]
Thus \(A_{QQ}\) is invertible on \(\operatorname{Ran}Q\).

If \(f=f_\Pi+f_Q\) satisfies \(\mathbb Af=0\), the \(Q\)-equation gives
\(f_Q=-A_{QQ}^{-1}A_{Q\Pi}f_\Pi\). Substitution into the \(\Pi\)-equation
gives the Schur-complement equation
\begin{equation}\label{equ:Feshbach-equation}
\left(
A_{\Pi\Pi}-A_{\Pi Q}A_{QQ}^{-1}A_{Q\Pi}
\right)f_\Pi=0.
\end{equation}
Conversely, every solution of \eqref{equ:Feshbach-equation} gives a
solution of the full equation through the preceding formula for
\(f_Q\).

The off-diagonal blocks are \(O(L^{-1})\), so
\(A_{\Pi Q}A_{QQ}^{-1}A_{Q\Pi}=O(L^{-2})\).
Using \eqref{equ:T-effective-simple} and \eqref{equ:B-xi}, the
two-dimensional equation \eqref{equ:Feshbach-equation} becomes
\begin{equation}\label{equ:Feshbach-expansion}
\det\left[
\left(
1-\frac{|D|L}{2\pi}\widehat\lambda\eta^2
\right)I_2
-\widehat\lambda\eta^2\mathbb K(\xi)
+
O\left(
\frac1{L^2}
+\frac{\epsilon}{\eta^2L^2}
\right)
\right]=0.
\end{equation}

The eigenvalues of \(\mathbb K(\xi)\) are \(\mu_\pm(\xi)\) in
\eqref{equ:mu-pm-xi}.
For real \(\widehat\lambda\) in the above spectral window, the Schur
complement is Hermitian and strictly decreasing in \(\widehat\lambda\).
Indeed, the Fourier representation gives
\(\partial_{\widehat\lambda}\widehat{\mathcal A}
=-\mathcal T-\epsilon\widehat\lambda\partial_\omega\mathcal T<0\),
with \(\mathcal T=\mathcal T_{D_\eta}^{\xi,\epsilon\widehat\lambda}\),
and differentiation of the Schur complement preserves this inequality.
The min--max principle and the two-root count therefore pair its zeros
with the ordered eigenvalues of the leading matrix, including when
\(\mu_+=\mu_-\). The matrix perturbation bound in
\eqref{equ:Feshbach-expansion} gives
\[
1-\widehat\lambda_\pm(\xi)\eta^2
\left(
\frac{|D|L}{2\pi}+\mu_\mp(\xi)
\right)
=
O\left(
\frac1{L^2}
+\frac{\epsilon}{\eta^2L^2}
\right).
\]
Solving for \(\widehat\lambda_\pm(\xi)\) and expanding the denominator
gives \eqref{equ:general-xi-refined-eigenvalues}.

Finally, the preceding formula for \(f_Q\) implies
\(\|f_Q\|\le C\delta_{\epsilon,\eta}\|f_\Pi\|\), which proves the
characteristic-function estimate.
\end{proof}

\subsection{Dirac point at \texorpdfstring{\(K\)}{K}}
In this subsection, we investigate the Dirac point structure at $\xi=K$. Set \(\kappa=e^{2\pi i/3}\) and define the spatial rotation
\begin{equation}
\rho(x):=q+R(x-q),
\label{equ:physical-rotation}
\end{equation}
where \(q\) is given by \eqref{eq:q_rotation_center}.
The rotation \(u\mapsto u\circ\rho^{-1}\) of \(K\)-quasiperiodic
functions restricts to the operator
\(R_\eta:L^2(D_\eta)\to L^2(D_\eta)\) given by
\[
( R_\eta f)(x)
:=
\begin{cases}
\kappa\,f\!\left(z_1+R^{-1}(x-z_1)\right),
&x\in D_{\eta,1},\\[2mm]
\kappa^2\,f\!\left(z_2+R^{-1}(x-z_2)\right),
&x\in D_{\eta,2}.
\end{cases}
\]
Indeed,
\[
\rho^{-1}z_1=z_1-2\mathbf e_1,\qquad
\rho^{-1}z_2=z_2-\mathbf e_1,
\]
so the phases are \(e^{-2iK\cdot\mathbf e_1}=\kappa\) and
\(e^{-iK\cdot\mathbf e_1}=\kappa^2\), respectively.
We also define the antiunitary inversion-conjugation operator
\( P_\eta:L^2(D_\eta)\to L^2(D_\eta)\) by
\[
( P_\eta f)(x):=\overline{f(-x)},
\qquad x\in D_\eta.
\]
The operator \(R_\eta\) is unitary, and
\[
R_\eta^3=I,\qquad P_\eta^2=I,\qquad
R_\eta P_\eta=P_\eta R_\eta.
\]

The Fourier representation \eqref{equ:Gfourier} gives
\[
G_\lambda^K(\rho x,\rho y)=G_\lambda^K(x,y),\qquad
\overline{G_\lambda^K(-x,-y)}=G_{\overline\lambda}^K(x,y).
\]
Since the periodic inclusion array is invariant under these spatial
symmetries, restriction of the resolvent identities to \(D_\eta\)
yields
\begin{align*}
R_\eta
\widehat{\mathcal A}
(\widehat\lambda,K;\epsilon,\eta)
&=
\widehat{\mathcal A}
(\widehat\lambda,K;\epsilon,\eta)
R_\eta,\\
P_\eta
\widehat{\mathcal A}
(\widehat\lambda,K;\epsilon,\eta)
&=
\widehat{\mathcal A}
(\overline{\widehat\lambda},K;\epsilon,\eta)
P_\eta.
\end{align*}
In particular, \(P_\eta\) commutes with the characteristic operator
when \(\widehat\lambda\) is real.

Since \(R_\eta\) is unitary and \(R_\eta^3=I\), the space \(L^2(D_\eta)\) admits the orthogonal
decomposition
\[
L^2(D_\eta)
=
L_0\oplus L_1\oplus L_2,
\]
where
\[
L_j
:=
\left\{
f\in L^2(D_\eta):
R_\eta f=\kappa^j f
\right\},
\qquad
j=0,1,2.
\]
Each of these subspaces is invariant under
\(\widehat{\mathcal A}
(\widehat\lambda,K;\epsilon,\eta)\).

Moreover, the commutation of \(R_\eta\) and the antiunitary \(P_\eta\) gives
\[
P_\eta(L_1)=L_2,
\qquad
P_\eta(L_2)=L_1,
\qquad
P_\eta(L_0)=L_0.
\]

Notice that
\[
R_\eta\chi_{D_{\eta,1}}
=
\kappa\chi_{D_{\eta,1}},
\qquad
R_\eta\chi_{D_{\eta,2}}
=
\kappa^2\chi_{D_{\eta,2}},
\]
so that
\[
\chi_{D_{\eta,1}}\in L_1,
\qquad
\chi_{D_{\eta,2}}\in L_2.
\]

Repeating the proof of Theorem~\ref{thm:two_characteristic_values}
within the invariant subspaces \(L_1\) and \(L_2\), respectively,
one obtains exactly one characteristic value, counted with algebraic
multiplicity, in each subspace near
\[
\frac{2\pi}{|D|\eta^2|\log\eta|}.
\]

Since \(\epsilon=1/\tau>0\), Theorem~\ref{thm:LSequation} and
self-adjointness of \(P_K\) imply that these characteristic values
are real.  Thus the symmetry \(P_\eta\) implies that the two
characteristic values coincide. Indeed, if \(\phi_1(\eta,\epsilon)\in L_1\) with
\[
\widehat{\mathcal A}
(\widehat\lambda,K;\epsilon,\eta)\phi_1(\eta,\epsilon)=0,
\]
then
\[
\widehat{\mathcal A}
(\widehat\lambda,K;\epsilon,\eta)
P_\eta\phi_1(\eta,\epsilon)
=0,
\]
and
\[
P_\eta\phi_1(\eta,\epsilon)\in L_2.
\]
Since there is only one characteristic value in \(L_2\) near
\(\frac{2\pi}{|D|\eta^2|\log\eta|}\), it follows that
\[
\widehat\lambda_1(K)
=
\widehat\lambda_2(K).
\]
Each sector kernel is one-dimensional.  Since
Theorem~\ref{thm:two_characteristic_values} counts only two
characteristic values in this window, there is no additional
kernel in \(L_0\).  Consequently, the common characteristic value
has geometric multiplicity exactly two.

\begin{coro}[The double characteristic value at \(K\)]
\label{coro:refined-K-double}
Under the assumptions of Proposition~\ref{prop:general-xi-effective},
with \(K\in\Xi\), the two characteristic values coincide at \(\xi=K\).  Their common
value satisfies
\begin{equation}\label{equ:lambda0-refined-K}
\widehat\lambda_0^\epsilon(\eta)
=
\frac{2\pi}{|D|\eta^2L}
\left[
1-\frac{2\pi\kappa_K}{|D|L}
+
O\left(
\frac1{L^2}
+\frac{\epsilon}{\eta^2L^2}
\right)
\right],
\end{equation}
where \(\kappa_K\) is defined by \eqref{equ:kappa-xi} at \(\xi=K\).
Its geometric multiplicity is two.
\end{coro}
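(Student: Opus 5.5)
The corollary combines three ingredients. The coincidence and geometric multiplicity come from the symmetry discussion just before the statement. The asymptotic formula comes from Proposition~\ref{prop:general-xi-effective} at \(\xi=K\). The only new input is that the off-diagonal interaction coefficient \(g(K)\) vanishes, so that \(\mu_+(K)=\mu_-(K)=\kappa_K\).

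First, the coincidence. Since \(K\in\Xi\), Theorem~\ref{thm:two_characteristic_values} counts exactly two characteristic values near \(2\pi/(|D|\eta^2L)\). The sector decomposition \(L_0\oplus L_1\oplus L_2\) gives one such value in each of \(L_1\) and \(L_2\), and none in \(L_0\). These values are real, by self-adjointness of \(P_K\) and \(\epsilon>0\). The antiunitary \(P_\eta\) then maps the \(L_1\) kernel into the \(L_2\) kernel at the same real \(\widehat\lambda\), so the two values coincide and the kernel is two-dimensional. I will simply cite this argument, and denote the common value by \(\widehat\lambda_0^\epsilon(\eta)\).

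Second, I show \(g(K)=|D|G_0^K(z_1,z_2)=0\) directly from the rotation invariance \(G_0^K(\rho x,\rho y)=G_0^K(x,y)\) and quasiperiodicity. The computation above gives \(\rho^{-1}z_1=z_1-2\mathbf e_1\) and \(\rho^{-1}z_2=z_2-\mathbf e_1\). Quasiperiodicity in each variable, \(G^K(x+\ell,y)=e^{iK\cdot\ell}G^K(x,y)\) and \(G^K(x,y+\ell)=e^{-iK\cdot\ell}G^K(x,y)\), then yields
\[
G_0^K(z_1,z_2)=G_0^K(\rho^{-1}z_1,\rho^{-1}z_2)=e^{-2iK\cdot\mathbf e_1}e^{iK\cdot\mathbf e_1}G_0^K(z_1,z_2)=\kappa\,\kappa^{-2}\,G_0^K(z_1,z_2).
\]
Here \(e^{-iK\cdot\mathbf e_1}=\kappa^2\), so the prefactor is \(\kappa^{-1}\neq1\), and hence \(G_0^K(z_1,z_2)=0\). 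I will double-check the sign conventions against the stated phases \(\kappa\) and \(\kappa^2\) of \(R_\eta\). Any consistent convention gives a prefactor equal to a nontrivial cube root of unity, because \(\chi_{D_{\eta,1}}\) and \(\chi_{D_{\eta,2}}\) lie in different sectors. An alternative derivation: \(\Pi\mathbb B(K)\Pi\) commutes with the restriction of the rotation to the constant-mode space, which is \(\operatorname{diag}(\kappa,\kappa^2)\), so its off-diagonal entry must vanish. Consequently \(\mathbb K(K)=\kappa_K I_2\) and \(\mu_\pm(K)=\kappa_K\).

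Finally, I substitute into \eqref{equ:general-xi-refined-eigenvalues}. Both \(\widehat\lambda_\pm(K)\) equal \(\widehat\lambda_0^\epsilon(\eta)\) and \(\mu_\mp(K)=\kappa_K\), which gives \eqref{equ:lambda0-refined-K} with the same remainder \(O(L^{-2}+\epsilon/(\eta^2L^2))\). This uses the uniformity of Proposition~\ref{prop:general-xi-effective} on \(\Xi\ni K\).

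The main obstacle is the vanishing of \(g(K)\). It depends on getting the quasiperiodic phase conventions exactly right; if they are wrong, the prefactor could spuriously equal \(1\). I will settle this with the representation-theoretic version: the interaction matrix intertwines two distinct one-dimensional characters of the rotation, so its off-diagonal entry vanishes.
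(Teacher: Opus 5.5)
Your proof is correct. It differs from the paper's only in how you obtain $g(K)=0$.

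The paper argues at the operator level. The rescaled rotation $U_\eta R_\eta U_\eta^{-1}$ commutes exactly with $\mathbb T^K_\eta(\epsilon\widehat\lambda)$ and with $\Pi$. Letting $\eta\to0$ and $\epsilon/(\eta^2L)\to0$ in the commutator of \eqref{equ:T-effective-simple} therefore shows that the $\eta$-independent operator $\mathbb B(K)$ also commutes with the rotation. Since $e_1$ and $e_2$ lie in the distinct eigenspaces for $\kappa$ and $\kappa^2$, this forces $\langle e_1,\mathbb B(K)e_2\rangle=0$.

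You instead compute directly with the lattice Green function. Rotation invariance $G_0^K(\rho x,\rho y)=G_0^K(x,y)$, the identities $\rho^{-1}z_1=z_1-2\mathbf e_1$ and $\rho^{-1}z_2=z_2-\mathbf e_1$, and quasiperiodicity in each variable give $G_0^K(z_1,z_2)=e^{-iK\cdot\mathbf e_1}G_0^K(z_1,z_2)$. Since $K\cdot\mathbf e_1=2\pi/3$, the prefactor is $\kappa^2\neq1$. Your phases are right, so the fallback you mention is unnecessary. This is exactly the identity $F(R\xi)=e^{-i\xi\cdot\mathbf e_1}F(\xi)$ that the paper itself uses later, in the proof of Theorem~\ref{thm:refined-Dirac-point}, evaluated at $\xi=K$.

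What each route buys:
\begin{itemize}
\item Your route is more elementary. It shows the vanishing is an exact property of the Green function alone, independent of $\eta$, $\epsilon$ and of the uniform operator expansion.
\item The paper's route avoids tracking quasiperiodic phases by reusing the sector structure already established for the exact pencil.
\end{itemize}

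One caution about your \emph{representation-theoretic} alternative as you state it. The diagonal blocks $\mathcal K_K$ of $\mathbb B(K)$ commute with the rescaled rotation trivially, because the kernel is rotation-invariant and $D$ is $R$-invariant. Conjugation multiplies the off-diagonal block by $\kappa^{-1}$. Hence the assertion that $\mathbb B(K)$ commutes with the rotation is equivalent to $g(K)=0$, so it cannot be taken as given. It becomes a proof only through the paper's limiting argument from the exact commutation relation.

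The remaining steps match the paper: you cite the symmetry argument for coincidence and multiplicity two, then substitute $\mu_\pm(K)=\kappa_K$ into \eqref{equ:general-xi-refined-eigenvalues}.
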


\begin{proof}
The preceding symmetry argument already proves the exact coincidence
and geometric multiplicity two.  In the rescaled space, the constant
modes satisfy \(e_j\in U_\eta L_j\), \(j=1,2\).
The rescaled rotation \(U_\eta R_\eta U_\eta^{-1}\) is independent
of \(\eta\) and commutes with \(\Pi\).  Taking
\(\eta\to0\) and \(\epsilon/(\eta^2L)\to0\) in the commutator
of \eqref{equ:T-effective-simple} shows that \(\mathbb B(K)\)
also preserves its eigenspaces.
  Therefore
\(\langle e_1,\mathbb B(K)e_2\rangle=0\).  By
\eqref{equ:B-xi}, this gives
\begin{equation}\label{equ:gK-zero}
g(K)=|D|G_0^K(z_1,z_2)=0.
\end{equation}
By \(R_{-\xi}=R_\xi\), we have
\(\kappa_K=\kappa_{-K}\).
Consequently,
\[
\mathbb K(K)=\kappa_K I_2,
\qquad
\mu_+(K)=\mu_-(K)=\kappa_K.
\]
Formula \eqref{equ:lambda0-refined-K} follows from
\eqref{equ:general-xi-refined-eigenvalues} evaluated at \(\xi=K\).
\end{proof}

\begin{thm}[Local eigenvalue asymptotics near \(K\)]
\label{thm:refined-Dirac-point}
Assume the hypotheses of Proposition~\ref{prop:general-xi-effective},
with \(\Xi\) containing a neighborhood of \(K\).
The two characteristic values coincide at \(K\), with common
value \(\widehat\lambda_0^\epsilon(\eta)\) as in
Corollary~\ref{coro:refined-K-double}.
There exists a fixed neighborhood
\(\mathcal U_K\Subset\operatorname{int}\Xi\) of \(K\) such that,
with the same upper/lower labeling as above, uniformly
for \(K+h\in\mathcal U_K\) in the stated parameter regime,
\begin{align}
\widehat\lambda_\pm(K+h)
={}&
\widehat\lambda_0^\epsilon(\eta)
\pm
\frac{4\pi^2|c|}{|D|\eta^2L^2}
|h|
+O\left(
\frac{|h|^2}{\eta^2L^2}
+
\frac1{\eta^2L^3}
+
\frac{\epsilon}{\eta^4L^3}
\right),
\label{equ:refined-Dirac-cone}
\end{align}
where
\[
\left.
\nabla_\xi G_0^\xi
\right|_{\xi=K}(z_1,z_2)
=
c
\begin{pmatrix}1\\i\end{pmatrix},
\qquad
c\ne0.
\]
\end{thm}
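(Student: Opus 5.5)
Near \(K\), the plan is to expand the Schur complement of Proposition~\ref{prop:general-xi-effective} in \(h\) and not differentiate the eigenvalue formula. For \(\xi=K+h\in\mathcal U_K\), the two characteristic values are the zeros of the \(2\times2\) determinant \eqref{equ:Feshbach-expansion}. Its leading matrix is
\[
\Bigl(1-\tfrac{|D|L}{2\pi}\widehat\lambda\eta^2\Bigr)I_2-\widehat\lambda\eta^2\,\mathbb K(K+h).
\]
I will Taylor expand \(\mathbb K(K+h)\) at \(h=0\). \(R_\xi\) and \(G_0^\xi(z_1,z_2)\) are real-analytic in \(\xi\) on \(\Xi\), because the reciprocal lattice is avoided and the regular part has a convergent Fourier/Ewald representation. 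By Corollary~\ref{coro:refined-K-double}, \(\mathbb K(K)=\kappa_K I_2\) and \(g(K)=0\). Therefore
\[
\mathbb K(K+h)=\kappa_{K+h}I_2+\begin{pmatrix}0&g(K+h)\\ \overline{g(K+h)}&0\end{pmatrix},
\]
with \(\kappa_{K+h}=\kappa_K+O(|h|)\) and \(g(K+h)=|D|\,h\cdot\nabla_\xi G_0^\xi|_K(z_1,z_2)+O(|h|^2)\).

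The first real task is to identify the gradient. The rotation covariance \(G_0^{R\xi}(\rho x,\rho y)=\) (phase)\(\cdot G_0^\xi(x,y)\) holds, and the phases at \(z_1,z_2\) are \(\kappa,\kappa^2\). Differentiating it at \(\xi=K\), using \(RK\equiv K\) mod \(\Lambda^*\), shows that the vector \(v=\nabla_\xi G_0^\xi|_K(z_1,z_2)\) satisfies \(R^{T}v=\bar\kappa v\) (or \(\kappa v\), depending on the convention). The eigenvectors of a \(2\pi/3\) rotation are multiples of \((1,\pm i)^T\), which gives \(v=c(1,i)^T\). Then \(|g(K+h)|=|D||c||h|+O(|h|^2)\), since \(|h_1+ih_2|=|h|\). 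This yields \(\mu_\pm(K+h)=\kappa_{K+h}\pm|D||c||h|+O(|h|^2)\).

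Inserting these values into \eqref{equ:general-xi-refined-eigenvalues} gives
\[
\widehat\lambda_\pm-\widehat\lambda_0^\epsilon=\frac{2\pi}{|D|\eta^2L}\cdot\frac{2\pi(\pm|D||c||h|+O(|h|))}{|D|L}+\text{errors}.
\]
The \(O(|h|)\) contribution from \(\kappa_{K+h}-\kappa_K\) is a problem. It shifts both values equally, but it is not of the stated form. I will remove it by symmetry. Conjugation covariance, \(R_{-\xi}=R_\xi\), together with rotation invariance of \(R_\xi\) about \(K\), shows that \(\nabla_\xi\kappa_\xi|_K\) is a rotation-fixed vector, hence zero. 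So \(\kappa_{K+h}-\kappa_K=O(|h|^2)\), which contributes \(O(|h|^2/(\eta^2L^2))\).

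The genuine obstacle is the error term. The remainder \(O(L^{-2}+\epsilon/(\eta^2L^2))\) from Proposition~\ref{prop:general-xi-effective}, multiplied by \(1/(\eta^2L)\), gives \(1/(\eta^2L^3)+\epsilon/(\eta^4L^3)\), as stated. However, it is not \(h\)-small, so I must make sure it does not spoil the \(\pm\) splitting structure. I will control it by expanding the exact Schur complement \(S(\widehat\lambda,K+h)\) around \(h=0\). At \(h=0\) the symmetry forces \(S\) to be scalar on \(\mathcal E_0\), since \(e_1,e_2\) lie in different rotation sectors. The error then only needs to be stated as an additive bound, and the min--max pairing from the proof of Proposition~\ref{prop:general-xi-effective} (monotonicity in \(\widehat\lambda\)) converts the matrix perturbation into eigenvalue bounds with the same labeling.

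The last step, and the hardest, is to prove \(c\neq0\). I would differentiate the Ewald representation of \(G_0^\xi(z_1,z_2)\) at \(K\). This gives an exponentially convergent lattice sum, and I would bound the tail rigorously so as to certify a nonzero numerical value, deferring that computation to Appendix~\ref{app:c-nonzero}.
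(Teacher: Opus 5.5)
Your proposal is correct and follows essentially the paper's own argument. You expand $\mathbb K(K+h)$ and use rotation covariance to get $\nabla_\xi G_0^\xi|_K(z_1,z_2)=c(1,i)^T$: the derivative of the $\xi$-dependent phase $e^{-i\xi\cdot\mathbf e_1}$ drops out because $g(K)=0$, and $e^{-iK\cdot\mathbf e_1}=\kappa^2$ fixes the eigenvalue $\bar\kappa$, hence the direction $(1,i)^T$. You then kill $\nabla\kappa_K$ by the same symmetry, substitute into \eqref{equ:general-xi-refined-eigenvalues}, and defer $c\neq0$ to the Ewald bound of Appendix~\ref{app:c-nonzero}, as the paper does; your detour through the exact Schur complement is unnecessary, since the statement only asserts the additive error inherited from Proposition~\ref{prop:general-xi-effective}, and the exact nonzero splitting is the separate content of Appendix~\ref{app:exact-Dirac-cone}.
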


\begin{proof}
Threefold rotational symmetry gives \(\kappa_{R\xi}=\kappa_\xi\),
with quasi-momenta understood modulo \(\Lambda^*\). Since
\(RK=K\pmod{\Lambda^*}\), the vector \(\nabla_\xi\kappa_K\) must be
fixed by the rotation \(R\), and hence it vanishes. Using
\(\kappa_{-\xi}=\kappa_\xi\), we obtain
\[
\kappa_{K+h}
\;=\;\kappa_{-K-h}
\;=\;
\kappa_K+O(|h|^2).
\]
Set \(F(\xi):=G_0^\xi(z_1,z_2)\) and \(d:=z_1-z_2\).
Since \(R^{-1}d=d-\mathbf e_1\), rotation covariance and
quasiperiodicity give
\[
 F(R\xi)=e^{-i\xi\cdot\mathbf e_1}F(\xi).
\]
Differentiating at \(K\), using \(F(K)=0\) and
\(RK\equiv K\pmod{\Lambda^*}\), yields
\(R^T\nabla F(K)=\kappa^2\nabla F(K)\).
The corresponding eigenspace is \(\mathbb C(1,i)^T\), proving
the asserted form \(\nabla F(K)=c(1,i)^T\).
On the other hand, \eqref{equ:gK-zero} and Taylor expansion give
\begin{align*}
g(K+h)
&=
|D|\left[
G_0^K(z_1,z_2)
+
h\cdot
\left.
\nabla_\xi G_0^\xi
\right|_{\xi=K}(z_1,z_2)
\right]
+
O(|h|^2)
\\
&=
|D|c(h_1+ih_2)+O(|h|^2).
\end{align*}
Appendix~\ref{app:c-nonzero} shows that \(c\ne0\).  Hence
\[
|g(K+h)|
=
|D||c|\,|h|+O(|h|^2),
\]
and \eqref{equ:mu-pm-xi} yields
\[
\mu_\pm(K+h)
=
\kappa_K
\pm |D||c|\,|h|
+
O(|h|^2).
\]
Substitution into \eqref{equ:general-xi-refined-eigenvalues} gives
the asserted linear term, with Taylor error \(O(|h|^2/(\eta^2L^2))\).
\end{proof}

\begin{remark}[The exact Dirac cone]
\label{rem:exact-Dirac-cone}
The uniform eigenvalue error in Theorem~\ref{thm:refined-Dirac-point}
does not by itself prove a nonzero conical slope.  Instead, differentiating
the exact operator pencil and taking its Schur complement on the
two-dimensional kernel proves that the exact off-diagonal
first-order coefficient is nonzero; see
Appendix~\ref{app:exact-Dirac-cone}.
Under the same hypotheses, for sufficiently small \(\eta\) and
\(\epsilon/(\eta^2L)\), the two exact characteristic values satisfy
\begin{align}
 \widehat\lambda_\pm(K+h)
 ={}&\widehat\lambda_0^\epsilon(\eta)
 \pm\left[
 \frac{4\pi^2|c|}{|D|\eta^2L^2}
 +O\left(\frac1{\eta^2L^3}
          +\frac{\epsilon}{\eta^4L^3}\right)
 \right]|h|+O_{\epsilon,\eta}(|h|^2),\qquad h\to0.
 \label{equ:exact-Dirac-cone}
\end{align}
The slope is strictly positive. 
\end{remark}

\section{Continuation with respect to the inclusion size}
\label{sec:eta-continuation}

We fix the contrast \(\tau>0\) and continue a Dirac point in the
single real parameter \(\eta\).  The starting assumption is one
eigenvalue satisfying the three conditions in
\eqref{equ:definition-good-pair} below. Rellich's theorem then allows us to
follow analytic eigenvalue branches through spectral collisions.
Throughout this section, \(\partial D\) is \(C^2\), and
\((0,\eta_{\mathrm{c}})\) is the interval before the first contact between
distinct inclusions in the periodic array.

\subsection{Fixed-domain forms and symmetries}

Fix a reference size \(\eta_*\in(0,\eta_{\mathrm{c}})\).
Choose a smooth periodic vector field \(V\) equal to
\(x-z_j-\ell\) near \(D_{\eta_*,j}+\ell\).
The prescribed fields respect rotation and inversion, so averaging
over the finite symmetry group on the torus makes \(V\) equivariant
without changing these prescriptions.  For \(\eta\) sufficiently
close to \(\eta_*\), set
\[
 \Phi_\eta(x)=x+\Big(\frac{\eta}{\eta_*}-1\Big)V(x),
 \qquad F_\eta=D\Phi_\eta,\qquad J_\eta=\det F_\eta.
\]
Since \(F_\eta\) is uniformly close to \(I\), \(\Phi_\eta\) is a
diffeomorphism of the torus, with a lift satisfying
\(\Phi_\eta(x+\ell)=\Phi_\eta(x)+\ell\).  On each reference inclusion
it is the required dilation, hence
\(\Phi_\eta(D_{\eta_*,j})=D_{\eta,j}\).

On periodic functions, use the phase-corrected pullback
\[
 (\mathscr U_{p,\eta}u)(x)
 =e^{ip\cdot(\Phi_\eta(x)-x)}
 J_\eta(x)^{1/2}u(\Phi_\eta(x)).
\]
Since \(\Phi_\eta(x)-x\) is periodic, this map is unitary from
\(L^2_{\mathrm{per}}(Y,n_\eta\,dx)\) to
\(L^2_{\mathrm{per}}(Y,n_{\eta_*}\,dx)\) and maps
\(H^1_{\mathrm{per}}(Y)\) onto itself for real \(p,\eta\).
It is obtained by pulling back the physical Bloch function
\(e^{ip\cdot x}u(x)\) and then removing \(e^{ip\cdot x}\)
in the reference coordinates.  Changing variables gives the
equivalent eigenvalue problem
\begin{equation}
 a_{p,\eta}(v,u)=\lambda b(v,u)
 \quad\text{for all }v\in H^1_{\mathrm{per}}(Y),
 \label{equ:continued-weak-problem}
\end{equation}
where
\begin{align}
 a_{p,\eta}(u,v)
 &:=\int_Y J_\eta\,
 \overline{\left[
 F_\eta^{-T}(\nabla+ip)(J_\eta^{-1/2}u)
 \right]}
 \cdot\left[
 F_\eta^{-T}(\nabla+ip)(J_\eta^{-1/2}v)
 \right]\,dx,
 \label{equ:continued-a-form}\\
 b(u,v)&:=\int_Y n_{\eta_*}\,\overline u v\,dx.
 \label{equ:continued-b-form}
\end{align}
In particular, the mass form is independent of \(\eta\).
The coefficients in these forms are real analytic in \((p,\eta)\):
\(F_\eta\) is affine in \(\eta\), and its inverse and
\(J_\eta^{\pm1/2}\) are analytic near \(\eta_*\).
Moreover, \(a_{p,\eta}+b\) is locally uniformly coercive on
\(H^1_{\mathrm{per}}(Y)\).  The closed-form perturbation theorem
\cite[Chapter VII, \S4]{kato1995perturbation} therefore gives a
locally analytic family \(H(p,\eta)\) on
\(L^2_{\mathrm{per}}(Y,n_{\eta_*}\,dx)\),
self-adjoint for real parameters.  Its resolvent is compact because
\(H^1_{\mathrm{per}}(Y)\) embeds compactly into
\(L^2_{\mathrm{per}}(Y,n_{\eta_*}\,dx)\).
Its eigenvalues are precisely the physical Bloch eigenvalues at
quasi-momentum \(p\).

We use the same spatial rotation \(\rho\) from
\eqref{equ:physical-rotation} on the fixed space and set
\[
 (\mathscr Ru)(x)
 =e^{iK\cdot(\rho^{-1}x-x)}u(\rho^{-1}x),\qquad
 (\mathscr Pu)(x)=\overline{u(-x)}.
\]
These are the periodic representations of the \(K\)-quasiperiodic
symmetries under multiplication by \(e^{-iK\cdot x}\).
The phase in \(\mathscr R\) is periodic since
\(RK-K\in\Lambda^*\).  Thus \(\mathscr R\) is unitary,
\(\mathscr P\) is antiunitary, and the corresponding identities give
\[
 \mathscr R^3=\mathscr P^2=I,\qquad
 \mathscr R\mathscr P=\mathscr P\mathscr R.
\]
Writing \(h=p-K\), equivariance of \(\Phi_\eta\) gives
\[
\mathscr R\mathscr U_{K+h,\eta}
=\mathscr U_{K+Rh,\eta}\mathscr R,\qquad
\mathscr P\mathscr U_{p,\eta}=\mathscr U_{p,\eta}\mathscr P.
\]
The transformed forms therefore satisfy
\[
 H(K+Rh,\eta)=\mathscr R H(K+h,\eta)\mathscr R^{-1},
 \qquad H(p,\eta)\mathscr P=\mathscr P H(p,\eta).
\]
Using \(\kappa=e^{2\pi i/3}\) as before, write
\begin{equation}
 \mathcal H_j
 =\{u\in L^2_{\mathrm{per}}(Y,n_{\eta_*}\,dx):
                  \mathscr Ru=\kappa^j u\},\qquad j=0,1,2.
 \label{equ:rotation-sector-projections}
\end{equation}
These are mutually orthogonal reducing subspaces for \(H(K,\eta)\),
and \(\mathscr P\mathcal H_{1}=\mathcal H_{2}\).
We write \(H_j(\eta)=H(K,\eta)|_{\mathcal H_j}\), with form
domain \(H^1_{\mathrm{per}}(Y)\cap\mathcal H_j\).

\subsection{A criterion for a Dirac point}

For \(u\in H^1_{\mathrm{per}}(Y)\), set
\[
 \vartheta_\eta(u)
 :=\left.\partial_{p_1}a_{p,\eta}(u,\mathscr Pu)\right|_{p=K}.
\]
The derivatives of \(a_{p,\eta}\) in \(p\) are bounded forms on
\(H^1_{\mathrm{per}}(Y)\) and depend real analytically on \(\eta\).
Fix \(\eta\).  We show that a simple sector-\(1\) eigenvalue
absent from sector \(0\) gives a Dirac point whenever
\(\vartheta_\eta(u)\ne0\).

Suppose \(z\) is a simple eigenvalue of \(H_1(\eta)\), with
\(b\)-normalized eigenfunction \(u\), and
\(z\notin\sigma(H_0(\eta))\).
Since \(\mathscr P\) intertwines sectors \(1\) and \(2\),
\(z\) has multiplicity exactly two for \(H(K,\eta)\), with eigenspace
\(\operatorname{span}\{u,\mathscr Pu\}\).
Let \(\Gamma\) be a positively
oriented circle enclosing \(z\) and no other spectral point of
\(H(K,\eta)\). Define the Riesz projection
\[
\Pi(p):=\frac{1}{2\pi i}\oint_\Gamma
(\zeta-H(p,\eta))^{-1}\,d\zeta.
\]
The projected vectors \(\Pi(p)u\) and \(\Pi(p)\mathscr Pu\)
remain linearly independent near \(K\), so
Gram--Schmidt orthonormalization with respect to the inner product
\(b\) gives an \(H^1_{\mathrm{per}}(Y)\)-analytic basis
\(u(p),v(p)\) satisfying
\[
 u(K)=u,\quad v(K)=\mathscr Pu,\qquad
 b(u(p),u(p))=b(v(p),v(p))=1,\quad b(u(p),v(p))=0.
\]

In this orthonormal basis the restricted operator is represented by
\[
 \begin{pmatrix}
 a_{p,\eta}(u(p),u(p))&a_{p,\eta}(u(p),v(p))\\
 a_{p,\eta}(v(p),u(p))&a_{p,\eta}(v(p),v(p))
 \end{pmatrix}.
\]
Its eigenvalues are exactly the two Bloch eigenvalues near \(z\).
It is Hermitian and real analytic in \(p\), because both the form
and the basis are real analytic in the form norm.  At \(K\), the
eigenvalue equation makes this matrix equal to \(zI_2\).

For \(r=1,2\), the product rule for the upper-right entry gives
\[
 \begin{aligned}
 \partial_{p_r}\bigl[a_{p,\eta}(u(p),v(p))\bigr]
 ={}&(\partial_{p_r}a_{p,\eta})(u(p),v(p))\\
 &+a_{p,\eta}(\partial_{p_r}u(p),v(p))
  +a_{p,\eta}(u(p),\partial_{p_r}v(p)).
 \end{aligned}
\]
At \(K\), both basis vectors have eigenvalue \(z\).
The weak eigenvalue equation, together with Hermitian symmetry,
therefore turns the last two terms into
\[
 \begin{aligned}
 &\left.
 \bigl[a_{K,\eta}(\partial_{p_r}u,v)
       +a_{K,\eta}(u,\partial_{p_r}v)\bigr]\right|_{p=K}\\
 &\qquad=z\left.
 \bigl[b(\partial_{p_r}u,v)+b(u,\partial_{p_r}v)\bigr]
 \right|_{p=K}
 =z\left.\partial_{p_r}b(u(p),v(p))\right|_{p=K}=0.
 \end{aligned}
\]
The same cancellation applies to the diagonal entries, using the
constant norms instead of orthogonality.  Hence all first-order
entries are obtained by differentiating only the form, keeping
the basis fixed at \(\{u,\mathscr Pu\}\).  In particular,
\[
 \left.\partial_{p_1}
       \bigl[a_{p,\eta}(u(p),v(p))\bigr]\right|_{p=K}
 =\left.(\partial_{p_1}a_{p,\eta})(u,\mathscr Pu)\right|_{p=K}
 =\vartheta_\eta(u).
\]

Write \(h=p-K\).  Rotation covariance makes the diagonal
first-order entries invariant linear functions of \(h\), so they
vanish.  The upper-right first-order entry transforms by
\(\kappa^{-1}\), as does \(h_1+ih_2\) under the clockwise
rotation \(R\).  Its \(h_1\)-coefficient, computed above, is
\(\vartheta_\eta(u)\).  Hermitian symmetry determines the
lower-left entry.  The reduced matrix is therefore
\begin{equation}
 zI_2+
 \begin{pmatrix}
 0&\vartheta_\eta(u)(h_1+ih_2)\\
 \overline{\vartheta_\eta(u)}(h_1-ih_2)&0
 \end{pmatrix}
 +O_\eta(|h|^2).
 \label{equ:continued-Dirac-matrix}
\end{equation}
Consequently,
\begin{equation}
 \lambda_\pm(p,\eta)
 =z\pm|\vartheta_\eta(u)|\,|p-K|+O_\eta(|p-K|^2).
 \label{equ:continued-cone}
\end{equation}
If \(\vartheta_\eta(u)\ne0\), these two bands form a nondegenerate
Dirac cone with slope \(|\vartheta_\eta(u)|\).
The criterion does not depend on the normalized eigenfunction
chosen in sector \(1\): replacing \(u\) by \(e^{it}u\) multiplies
\(\vartheta_\eta(u)\) by \(e^{-2it}\), leaving its modulus unchanged.

We call \((z,\eta)\) \emph{good} if
\begin{equation}
 \begin{cases}
 z\text{ is a simple eigenvalue of }H_{1}(\eta),\\
 z\notin\sigma(H_{0}(\eta)),\\
 \vartheta_\eta(u)\ne0
   \text{ for a normalized }H_{1}(\eta)\text{-eigenfunction }u.
 \end{cases}
 \label{equ:definition-good-pair}
\end{equation}
Every good pair is thus a nondegenerate Dirac point.

\subsection{Rellich branches and analytic detection of good pairs}

We record the local analytic information needed for
\(\vartheta_\eta\).  Fix
\(\eta_0\in(0,\eta_{\mathrm{c}})\) and
\(z_c\in\sigma(H_{1}(\eta_0))\).
Choose a circle \(\Gamma\) about \(z_c\) enclosing no other distinct
eigenvalue of \(H_{1}(\eta_0)\) or \(H_{0}(\eta_0)\).
For \(\eta\) near \(\eta_0\), the associated sector Riesz projections
are analytic and have constant finite ranks \(N\ge1\) and \(M\ge0\).
They are analytic as maps from \(L^2_{\mathrm{per}}(Y,n_{\eta_*}\,dx)\)
to \(H^1_{\mathrm{per}}(Y)\).

For each sector spectral subspace, project a basis at \(\eta_0\)
onto the nearby spectral subspaces and apply Gram--Schmidt
orthonormalization with respect to the inner product \(b\).
This gives an orthonormal basis depending real analytically on
\(\eta\) in \(H^1_{\mathrm{per}}(Y)\).
In these bases the restricted operators are real-analytic matrices,
Hermitian for real \(\eta\) by self-adjointness of the operators.
These properties allow us to apply the finite-dimensional Rellich theorem
\cite[Chapter II, \S6]{kato1995perturbation}, which supplies branches
\[
 \lambda_1(\eta),\ldots,\lambda_N(\eta),\qquad
 \mu_1(\eta),\ldots,\mu_M(\eta),
\]
listing the enclosed sector-\(1\) and sector-\(0\) eigenvalues with
multiplicity, and normalized sector-\(1\) eigenfunctions
\(u_1(\eta),\ldots,u_N(\eta)\).
All eigenvalues are real analytic across \(\eta_0\), and the
eigenfunctions are real analytic with values in \(H^1_{\mathrm{per}}(Y)\).
Rellich's labeling is analytic, not necessarily increasing in energy.

For each \(j\), define the complex-valued real-analytic function
\begin{equation}
 g_j(\eta)=
 \vartheta_\eta\bigl(u_j(\eta)\bigr)
 \prod_{\substack{k=1\\k\ne j}}^N
       \bigl(\lambda_j(\eta)-\lambda_k(\eta)\bigr)
 \prod_{\ell=1}^M
       \bigl(\lambda_j(\eta)-\mu_\ell(\eta)\bigr),
 \label{equ:rellich-good-function}
\end{equation}
where an empty product is \(1\).  The first factor is real analytic
because \(\left.\partial_{p_1}a_{p,\eta}\right|_{p=K}\)
is an analytic bounded form on
\(H^1_{\mathrm{per}}(Y)\).
After shrinking the neighborhood, the listed eigenvalues contain
all spectrum close to \(z_c\).  Hence, for a branch with
\(\lambda_j(\eta_0)=z_c\),
\begin{equation}
 g_j(\eta)\ne0
 \quad\Longleftrightarrow\quad
 (\lambda_j(\eta),\eta)\text{ is good}.
 \label{equ:rellich-good-equivalence}
\end{equation}
In particular, the product detects simplicity as well as the other
two conditions; simplicity is not assumed across a collision.

If \(g_j\) is nonzero at one point of this neighborhood, it is not
identically zero, so its zeros are isolated.  In particular, it is
nonzero for \(0<|\eta-\eta_0|<\delta\), with sufficiently small
\(\delta>0\).  This is the mechanism for continuation through a
critical parameter.

\subsection{Global continuation}

\begin{thm}[Continuation from one good pair]
\label{thm:eta-global-continuation}
Fix \(\tau>0\), and suppose \((z_s,\eta_s)\) is good for some
\(\eta_s\in(0,\eta_{\mathrm{c}})\).
There is a real-analytic function
\(\lambda:(0,\eta_{\mathrm{c}})\to\mathbb R\), with
\(\lambda(\eta_s)=z_s\), which is a sector-\(1\) eigenvalue for every
\(\eta\).  The set
\[
 \mathcal C_\tau
 :=\{\eta\in(0,\eta_{\mathrm{c}}):(\lambda(\eta),\eta)
                                  \text{ is not good}\}
\]
is relatively closed and discrete. The ordered band indices may change at spectral crossings.
\end{thm}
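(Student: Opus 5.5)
Starting from the good pair \((z_s,\eta_s)\), the plan is to extend a single Rellich branch of sector-\(1\) eigenvalues maximally in \(\eta\). We then show that the maximal interval is all of \((0,\eta_{\mathrm{c}})\). Finally, the local machinery of the previous subsection, namely the functions \(g_j\) in \eqref{equ:rellich-good-function} and the equivalence \eqref{equ:rellich-good-equivalence}, identifies \(\mathcal C_\tau\) as the zero set of a real-analytic function that does not vanish identically.

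\textbf{Local step.} At any \(\eta_0\) and any \(z_c\in\sigma(H_1(\eta_0))\), the Riesz-projection construction produces analytic orthonormal bases. Rellich's theorem then gives real-analytic branches \(\lambda_1,\dots,\lambda_N\) on some interval \((\eta_0-\delta,\eta_0+\delta)\). These branches list all sector-\(1\) spectrum near \(z_c\), and they come with analytic eigenfunctions \(u_j(\eta)\).

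\textbf{Global step.} Let \(I\subset(0,\eta_{\mathrm{c}})\) be the union of all open intervals containing \(\eta_s\) on which there is a real-analytic \(\lambda\) with \(\lambda(\eta_s)=z_s\), \(\lambda(\eta)\in\sigma(H_1(\eta))\), and an analytic normalized eigenfunction \(u(\eta)\). By the identity theorem, two such extensions agree on their common interval, so they glue to a single analytic pair on \(I\). Near \(\eta_s\), simplicity of \(z_s\) makes the initial branch unique.

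Suppose the endpoint \(\eta^\dagger=\sup I\) lies below \(\eta_{\mathrm{c}}\); the infimum is handled symmetrically. The goal is to continue past \(\eta^\dagger\), and this is the main obstacle. We must show that \(\lambda(\eta)\) stays bounded as \(\eta\uparrow\eta^\dagger\), and then match it with one of the Rellich branches at \(\eta^\dagger\).

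\emph{Boundedness.} The map \(\Phi_\eta\) and all coefficients of \(a_{p,\eta}\) are controlled locally uniformly in \(\eta\in(0,\eta_{\mathrm{c}})\). This is why the reference point \(\eta_*\) may be moved along the interval: on overlaps the different fixed-domain realizations are unitarily equivalent, via compositions of the \(\mathscr U\) maps, which commute with \(\mathscr R\). Moreover, the physical spectrum at \(K\) in a given sector does not depend on the chosen realization, and min--max gives locally uniform two-sided bounds on the \(j\)th sector eigenvalue. Rellich branches cannot escape to infinity in finite \(\eta\), because each branch is a continuous selection from eigenvalues whose counting function is locally bounded.

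\emph{Matching.} Hence \(\lambda(\eta)\) has a limit point \(z_c\in\sigma(H_1(\eta^\dagger))\), using continuity of the spectrum. Take the finitely many Rellich branches \(\lambda_1,\dots,\lambda_N\) at \((z_c,\eta^\dagger)\) on \((\eta^\dagger-\delta,\eta^\dagger+\delta)\). For \(\eta\) slightly below \(\eta^\dagger\), \(\lambda(\eta)\) is an eigenvalue in the enclosed window, so \(\lambda(\eta)=\lambda_j(\eta)\) for some \(j\) on a set of \(\eta\) with an accumulation point. By pigeonhole over the finitely many \(j\), some single branch \(\lambda_j\) coincides with \(\lambda\) on such a set.

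Analyticity forces \(\lambda\equiv\lambda_j\) on the overlap. The eigenfunctions may differ, since \(u_j\) may be one of several eigenfunctions of \(\lambda_j\) when branches coincide identically. In that case we replace the eigenfunction on all of \(I\) by the analytic continuation of \(u_j\), justified by working with the analytic spectral projection of the identically coincident group. This extends the pair beyond \(\eta^\dagger\), contradicting maximality. So \(I=(0,\eta_{\mathrm{c}})\).

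\textbf{Discreteness of \(\mathcal C_\tau\).} At each \(\eta_0\in(0,\eta_{\mathrm{c}})\), identify \(\lambda\) near \(\eta_0\) with a branch \(\lambda_j\) and form \(g_j\) as in \eqref{equ:rellich-good-function}. By \eqref{equ:rellich-good-equivalence}, \(\mathcal C_\tau\cap(\eta_0-\delta,\eta_0+\delta)\) is the zero set of \(g_j\). This requires the mild check that goodness is independent of which normalized eigenfunction is chosen, which was shown to affect \(\vartheta_\eta\) only by a phase, and that \(\lambda_j\) cannot be identically equal to another branch.

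Let \(S=\{\eta:\ g\not\equiv0\text{ near }\eta\}\), which is well defined locally. Then \(S\) is open, contains \(\eta_s\) because \(g(\eta_s)\neq0\), and is closed in \((0,\eta_{\mathrm{c}})\): if \(g\equiv0\) near a point, isolatedness of zeros of a nonzero analytic function propagates across overlapping neighborhoods. By connectedness, \(S=(0,\eta_{\mathrm{c}})\). Therefore the zeros of \(g\) are locally isolated, and \(\mathcal C_\tau\) is relatively closed and discrete.

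One consequence of this step is that \(\lambda\) is never identically equal to another sector-\(1\) or sector-\(0\) branch, since that would make \(g\) vanish identically. This is what rules out the identical-coincidence case in the matching step, and it makes the eigenfunction analytic up to phase. Because Rellich's labeling ignores energy ordering, the ordered band index of \(\lambda(\eta)\) can jump where it crosses other bands, which is the final remark of the theorem.
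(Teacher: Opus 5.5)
Your overall architecture matches the paper's: maximal analytic continuation, matching with a local Rellich branch at an interior endpoint, and identifying $\mathcal C_\tau$ locally with the zero set of $g_j$, propagated by an open--closed argument. The genuine gap is the step you yourself flag as the main obstacle, namely boundedness of $\lambda(\eta)$ as $\eta\uparrow\eta^\dagger$. Locally uniform min--max bounds control each \emph{ordered} sector eigenvalue $E_j(\eta)$ for fixed $j$. An analytic branch, however, can change its ordered index at every crossing, and nothing you invoke prevents it from crossing infinitely many levels before $\eta^\dagger$. For instance, take the spectral data $\{1,2,3,\dots\}\cup\{(\eta^\dagger-\eta)^{-1}\}$ for $\eta<\eta^\dagger$ and $\{1,2,3,\dots\}$ for $\eta\ge\eta^\dagger$. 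It has a locally bounded counting function, continuous ordered eigenvalues, and spectrum continuous in every bounded window, yet it contains an analytic branch escaping to infinity. So ``a continuous selection from eigenvalues whose counting function is locally bounded'' does not imply boundedness.

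What excludes this for $H(K,\eta)$ is a quantitative estimate along the branch, which is the paper's Step 2. Since $b$ is $\eta$-independent in each fixed-domain representation, a normalized analytic eigenfunction gives the Hellmann--Feynman identity $\lambda'(\eta)=\partial_\eta a_{K,\eta}(u,u)$; you already carry such a $u$ in your definition of $I$. The relative form bound $|\partial_\eta a_{K,\eta}(u,u)|\le C_I\bigl(a_{K,\eta}(u,u)+b(u,u)\bigr)$ on a compact interval then yields $|\lambda'|\le C_I(1+\lambda)$. Gronwall bounds $\lambda$ and $\lambda'$ up to $\eta^\dagger$, so the limit $\lim_{\eta\uparrow\eta^\dagger}\lambda(\eta)$ actually exists.

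Two smaller points in the matching step also need repair. First, a mere limit point $z_c$ only gives $\lambda=\lambda_j$ along a sequence accumulating at $\eta^\dagger$. That point lies outside the open overlap on which both functions are analytic, so the identity theorem does not apply. You need $\lambda(\eta)$ to stay in the Rellich window on a whole interval $(\eta^\dagger-\delta',\eta^\dagger)$. Existence of the limit provides this; alternatively, once boundedness is known, the limit set is a connected subset of the discrete set $\sigma(H_1(\eta^\dagger))$, hence a single point. Second, excluding identical coincidence of $\lambda$ with another branch by appeal to the discreteness step is circular as written, because that step uses the branch on all of $(0,\eta_{\mathrm c})$. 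Run the open--closed argument on the current maximal interval $I$ first. Alternatively, as the paper does, build ``good except on a locally discrete set'' into the definition of the admissible continuations. In either case, drop the requirement of a global analytic eigenfunction, which the theorem does not need.
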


\begin{proof}
\textit{Step 1: start a local analytic branch.}
Since \((z_s,\eta_s)\) is good, \(z_s\) is a simple eigenvalue
of \(H_1(\eta_s)\).  The local analytic theory gives a
real-analytic eigenvalue branch \(\lambda(\eta)\) with
\(\lambda(\eta_s)=z_s\), together with a normalized analytic
eigenfunction.  Spectral isolation and continuity of the conical
coefficient ensure, after shrinking the neighborhood, that this
branch is good throughout it.

\medskip\noindent
\textit{Step 2: control the eigenvalue along a continuation.}
Cover a compact interval \(I\Subset(0,\eta_{\mathrm{c}})\) by finitely
many smaller neighborhoods on which the fixed-reference
constructions are valid.  Boundedness of the form derivatives
and uniform coercivity on these neighborhoods give a common
constant \(C_I\) such that, in each representation,
\[
 |\partial_\eta a_{K,\eta}(u,u)|
 \le C_I\bigl(a_{K,\eta}(u,u)+b(u,u)\bigr),
 \qquad u\in H^1_{\mathrm{per}}(Y).
\]
For a local normalized Rellich eigenpair, differentiation gives
\[
 \lambda'(\eta)=\partial_\eta a_{K,\eta}(u,u),\qquad
 |\lambda'(\eta)|\le C_I(1+\lambda(\eta)).
\]
Indeed, the terms involving \(u'\) cancel by the eigenvalue
equation and \(\partial_\eta b(u(\eta),u(\eta))=0\), since
\(b\) is fixed within each representation.  
Since \(\lambda\ge0\), integration yields
\begin{equation}
 1+\lambda(\eta)
 \le (1+\lambda(\eta_0))e^{C_I|\eta-\eta_0|}
 \label{equ:continued-eigenvalue-bound}
\end{equation}
whenever the branch is defined between \(\eta_0\) and \(\eta\)
in \(I\).  Thus at an interior endpoint the eigenvalue remains
bounded, and the derivative bound gives a finite limit.

\medskip\noindent
\textit{Step 3: extend across an interior endpoint.}
Suppose the initial branch has been continued to an open interval
containing \(\eta_s\), remaining good except on a locally discrete
set, and let \(r<\eta_{\mathrm{c}}\) be its right endpoint.
Choose a compact admissible interval containing \([\eta_s,r]\)
in its interior.  Step 2 bounds both \(\lambda\) and
\(\lambda'\) up to \(r\), so
\[
 z_c:=\lim_{\eta\uparrow r}\lambda(\eta)<\infty.
\]
We claim that \(z_c\in\sigma(H_1(r))\).
Otherwise, choose \(\delta_0>0\) such that
\([z_c-\delta_0,z_c+\delta_0]\) is disjoint from
\(\sigma(H_1(r))\).  In a fixed reference representation near
\(r\), norm-resolvent continuity ensures that this interval
remains disjoint from \(\sigma(H_1(\eta))\) for \(\eta\) close
to \(r\).  But \(\lambda(\eta)\to z_c\), so the eigenvalue
\(\lambda(\eta)\in\sigma(H_1(\eta))\) lies in this interval
for \(\eta<r\) sufficiently close to \(r\), a contradiction.

Apply the local Rellich construction at \((z_c,r)\), obtaining
branches \(\lambda_1,\ldots,\lambda_N\) on both sides of \(r\)
that list all sector-\(1\) eigenvalues near \(z_c\).
Since \(\lambda(\eta)\to z_c\), the incoming branch belongs to
this list on \((r-\delta,r)\).  By real analyticity, it agrees
throughout this overlap with one fixed Rellich branch
\(\lambda_j\).  Use \(\lambda_j\) to define its continuation
at \(r\) and to the right; the agreement on the overlap makes
this an analytic continuation.

There is a good point in the overlap, since the incoming bad set
is locally discrete.  Hence the corresponding \(g_j\) is nonzero
at that point and is not identically zero near \(r\).
Its zeros are isolated, so after shrinking \(\delta\),
\[
 g_j(\eta)\ne0,\qquad 0<|\eta-r|<\delta.
\]
By \eqref{equ:rellich-good-equivalence}, the extended branch is
good on this punctured neighborhood; only \(r\) itself may be
bad.  The same construction applies at a left interior endpoint.

\medskip\noindent
\textit{Step 4: continue over the full parameter interval.}
Consider all analytic continuations of the initial branch to
open intervals containing \(\eta_s\), each good except on a
locally discrete set.  Any two agree near \(\eta_s\), and hence,
by real analyticity, throughout their overlap.  Their union
therefore defines a single analytic branch on a maximal open
interval \(J\subset(0,\eta_{\mathrm{c}})\).  The bad set remains locally
discrete, since each point of \(J\) belongs to one of these
continuation intervals.  If \(J\) had an interior endpoint,
Step 3 would extend the branch beyond it while preserving this
property, contradicting maximality.  Consequently,
\[
 J=(0,\eta_{\mathrm{c}}).
\]

\medskip\noindent
\textit{Step 5: identify the exceptional set.}
Fix any interior parameter \(\eta_0\).  By real analyticity,
\(\lambda\) agrees near \(\eta_0\) with one of the local Rellich
branches \(\lambda_j\).  There is a nearby good point, so the
associated \(g_j\) is not identically zero.
Equation \eqref{equ:rellich-good-equivalence} identifies
\(\mathcal C_\tau\) locally with the zero set of \(g_j\).
It is therefore relatively closed and discrete, and its
intersection with any compact admissible interval is finite.
\end{proof}

For every sufficiently large \(\tau\), one can choose a small
admissible size \(\eta_s\) such that both \(\eta_s\) and
\(1/(\tau\eta_s^2|\log\eta_s|)\) are sufficiently small.
At \(K\), the Lippmann--Schwinger restriction and extension intertwine
the rotation representations. Thus Corollary~\ref{coro:refined-K-double}
gives \(z_s:=\tau^{-1}\widehat\lambda_0^\epsilon(\eta_s)\) as a simple
eigenvalue of \(H_1(\eta_s)\), absent from \(\sigma(H_0(\eta_s))\).
Let \(\widehat v_s>0\) be the exact slope in
Appendix~\ref{app:exact-Dirac-cone}. Since \(\lambda=\tau^{-1}\widehat\lambda\),
\eqref{equ:continued-cone} gives
\( |\vartheta_{\eta_s}(u)|=\tau^{-1}\widehat v_s>0\).
Hence \((z_s,\eta_s)\) is good.
Keeping \(\tau\) fixed, apply
Theorem~\ref{thm:eta-global-continuation} to continue this branch
throughout \((0,\eta_{\mathrm{c}})\).
Consequently, for every sufficiently large \(\tau\), a
nondegenerate Dirac point at \(K\) exists for almost every
admissible size \(\eta\).  More precisely, there is a relatively
closed discrete set \(\mathcal C_\tau\subset(0,\eta_{\mathrm{c}})\)
such that the continued branch is a nondegenerate Dirac point for every
\[
 \eta\in(0,\eta_{\mathrm{c}})\setminus\mathcal C_\tau.
\]

\section{Numerical experiments}
\label{sec:numerical-experiments}

We use the three-petal reference inclusion
\[
 D=\{(r\cos\theta,r\sin\theta):
       0\le r<1+0.3\cos(3\theta),\ 0\le\theta<2\pi\},
 \qquad |D|=1.045\pi.
\]
This gives a concrete geometry of the type shown in
Figure~\ref{fig:two_inclusions}.  It is invariant under rotation
by \(2\pi/3\); the pair \(z_1+\eta D,z_2-\eta D\) also has the
required inversion symmetry.  

The original Bloch problem is solved by the Fourier material-matrix
discretization
\[
 \sum_{G'\in\Lambda^*}
 \left[|G+\xi|^2\delta_{GG'}
 -\lambda\left(\delta_{GG'}+
 \frac{\tau}{|Y|}\int_{D_\eta}e^{-i(G-G')\cdot x}\,dx\right)
 \right]c_{G'}=0,
\]
truncated to \(|G+\xi|\le2\pi M\).  Boundary quadrature evaluates
the material coefficients without pixelating the interfaces.
For very small inclusions we instead pull each component back to the
fixed reference domain \(D\) by \(x=z_1+\eta X\) and
\(x=z_2-\eta X\), respectively, with \(X\in D\). The logarithmic part of
the Green kernel is integrated analytically on the diagonal cells,
and the resulting Nystr\"om matrix has a size independent of \(\eta\).
We solve directly for \(\eta^2\widehat\lambda\), thereby avoiding the
large numerical scale of \(\widehat\lambda\).  For \(0.01\leq\eta\leq0.1\), 
where the Fourier calculation remains feasible, the two discretizations give consistent values for the two lowest scaled eigenvalues.

\subsection{Eigenvalue asymptotics}
\label{sec:numerical-eigenvalue-verification}

We first test Proposition~\ref{prop:general-xi-effective}.  The lowest
two eigenvalues are computed independently of
\eqref{equ:general-xi-refined-eigenvalues} and rescaled by
\(\widehat\lambda=\tau\lambda\).  The coefficients \(\mu_\pm(\xi)\) are computed 
independently from the geometry of the reference inclusion and Ewald evaluations of the Green function.

We use \(\tau\eta^2=10L\), where \(L=|\log\eta|\), and sample
\(\eta\) between \(0.1\) and \(10^{-20}\).  The four momenta are
\[
 \xi\in\left\{K,K+(0.25,0.12)^T,0.3K,
 \frac{\boldsymbol\beta_1+\boldsymbol\beta_2}{4}\right\}.
\]
Figure~\ref{fig:flower-eigenvalue-test}
compares the numerical eigenvalues with the asymptotic prediction.

\begin{figure}[!htbp]
 \centering
 \includegraphics[width=\textwidth]{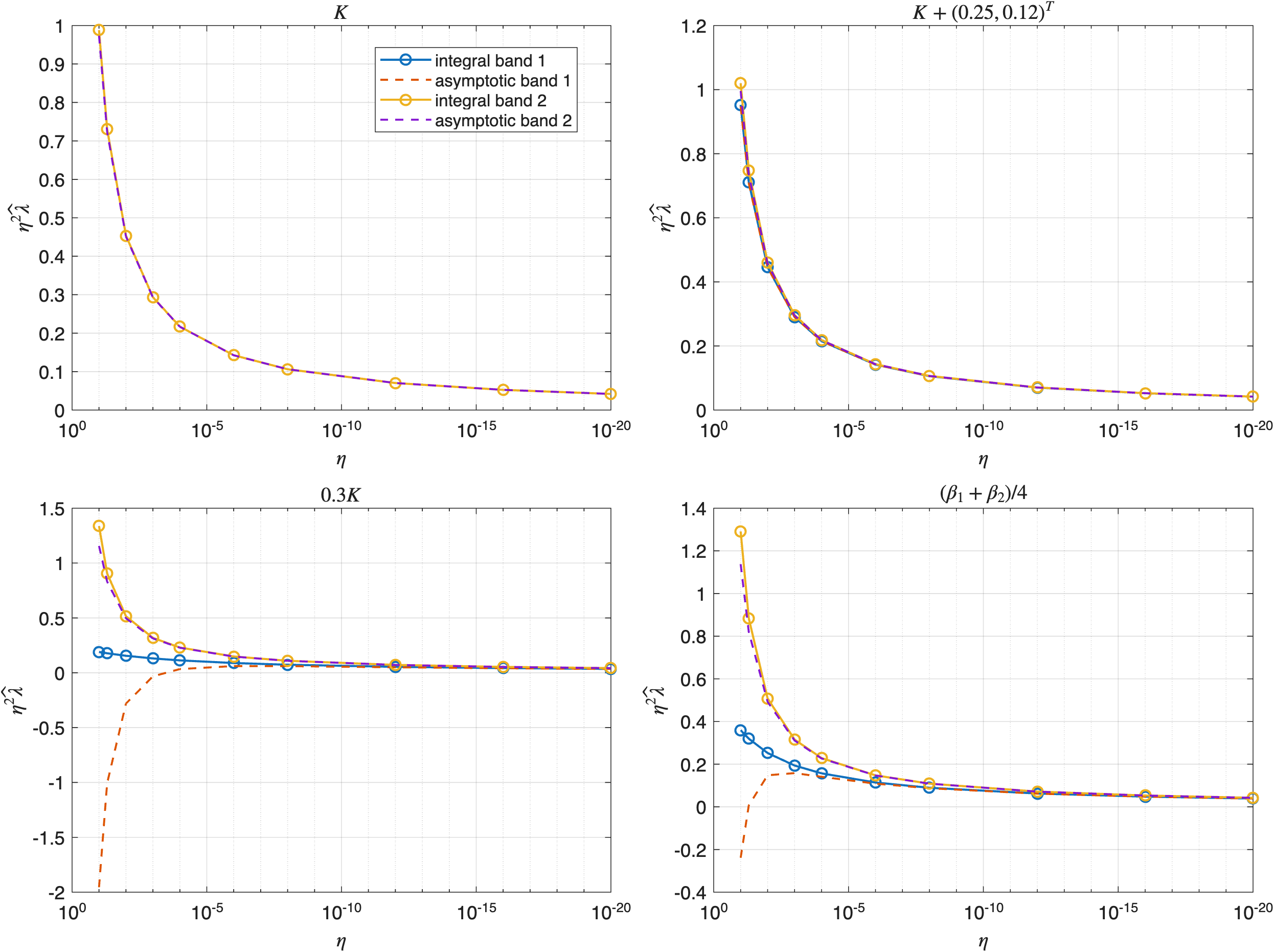}
 \caption{Three-petal inclusions: fixed-domain integral eigenvalues versus
 \eqref{equ:general-xi-refined-eigenvalues} for the two scaled
 eigenvalues along \(\tau\eta^2=10L\).  The fixed-reference-domain
 integral discretization permits the range \(0.1\geq\eta\geq10^{-20}\).}
 \label{fig:flower-eigenvalue-test}
\end{figure}

The common leading term obscures the band splitting at very small
\(\eta\). For the ordered numerical bands, write
\(\widehat\lambda_1=\widehat\lambda_-\),
\(\widehat\lambda_2=\widehat\lambda_+\), and set
\(\mu_1=\mu_+\), \(\mu_2=\mu_-\).
Subtracting the leading term and multiplying by \(L^2\) gives
\[
 L^2\left(\eta^2\widehat\lambda_j(\xi)
 -\frac{2\pi}{|D|L}\right)
 \longrightarrow-\frac{4\pi^2}{|D|^2}\mu_j(\xi).
\]
Figure~\ref{fig:flower-centered-test} displays this correction for the
same four momenta. The two centered branches approach the finite
limits predicted by the effective matrix and remain separated away
from \(K\); at \(K\), they coincide.

\begin{figure}[!htbp]
 \centering
 \includegraphics[width=\textwidth]{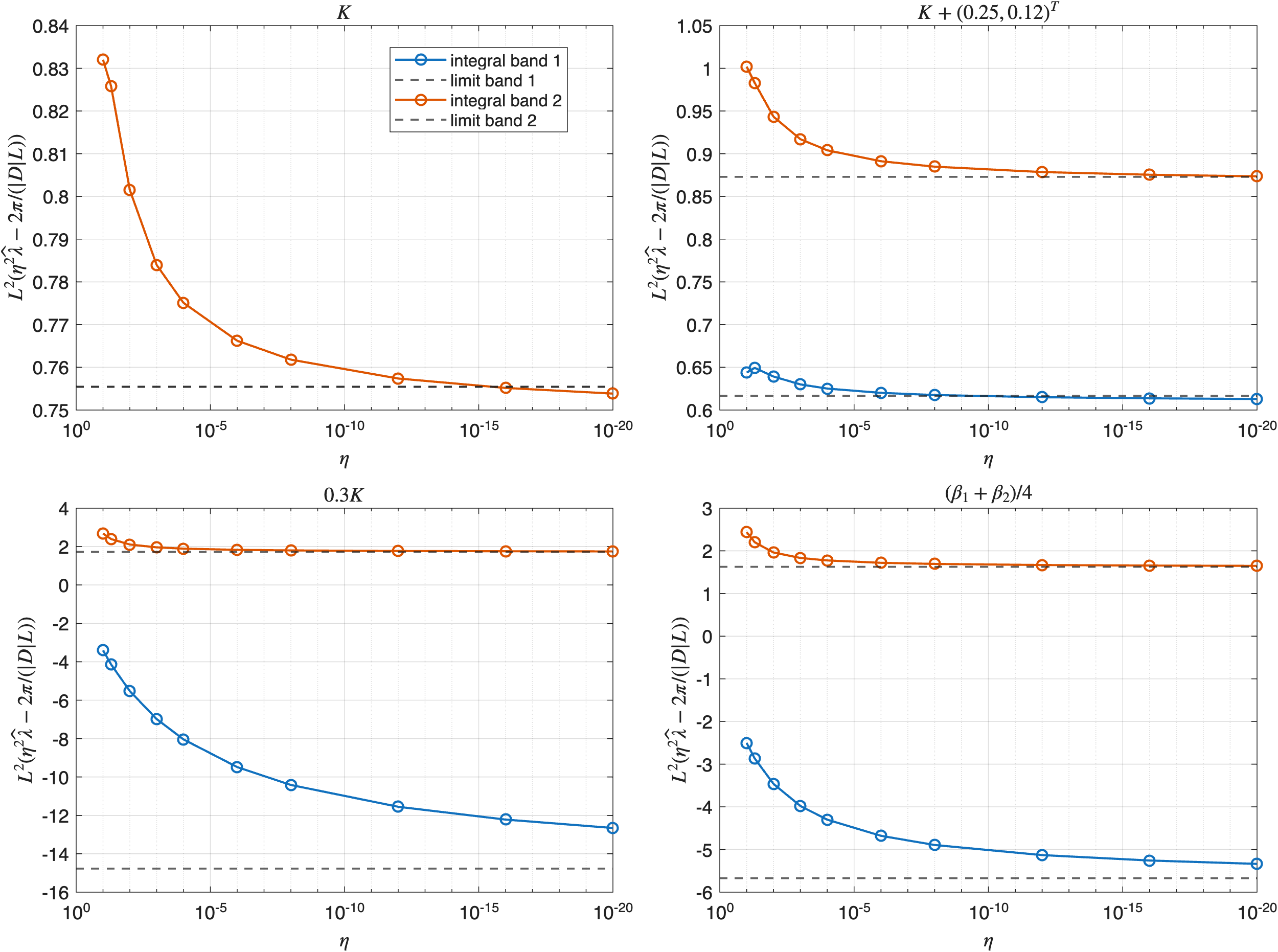}
 \caption{The centered second-order correction to the two scaled
 eigenvalues along \(\tau\eta^2=10L\).  Dashed lines are the limits
 \(-4\pi^2\mu_j(\xi)/|D|^2\).}
 \label{fig:flower-centered-test}
\end{figure}

\subsection{Dirac cone and conical slope}
\label{sec:numerical-dirac-cone}

For the slope comparisons below, fix the difference step
\[
 t_0=5\times10^{-4}
\]
and define
\[
 s_{\mathrm{num}}
 :=\frac{\widehat\lambda_+(K+t_0(1,0)^T)
 -\widehat\lambda_-(K+t_0(1,0)^T)}{2t_0},
 \qquad
 s_{\mathrm{asy}}
 :=\frac{4\pi^2|c|}{|D|\eta^2L^2},
 \qquad
 R_s:=\frac{s_{\mathrm{num}}}{s_{\mathrm{asy}}}.
\]
Here \(|c|\) is evaluated independently from the Ewald representation
in Appendix~\ref{app:c-nonzero}.  For the finite-difference quotient
of the exact eigenvalues, as \(t_0\to0\) with \(\epsilon,\eta\) fixed,
\eqref{equ:exact-Dirac-cone} gives
\[
 R_s=1+O\left(\frac1L+\frac1{\tau\eta^2L}\right)
 +\frac{O_{\epsilon,\eta}(t_0)}{s_{\mathrm{asy}}}.
\]
The last term is the finite-difference error; numerical evaluation
also introduces discretization error.
We next test the exact conical splitting in
\eqref{equ:exact-Dirac-cone}.  For \(\eta=10^{-4}\) and
\(\tau\eta^2=10L\), we compute the two lowest
eigenvalues along the line
\[
 \xi=K+t(1,0)^T,\qquad |t|\le0.02.
\]
The left panel of Figure~\ref{fig:numerical-dirac-cone} shows the
resulting crossing together with the leading asymptotic cone.  The
right panel shows the two numerical bands in the same neighborhood of
\(K\), in the coordinates \(\xi-K\).
Figure~\ref{fig:numerical-slope-error} plots \(R_s\) against \(1/L\)
for \(0.1\geq\eta\geq10^{-20}\), again along
\(\tau\eta^2=10L\).  On this path,
\[
 R_s=1+O\left(\frac1L+\frac1{10L^2}\right)
 +\frac{O_{\epsilon,\eta}(t_0)}{s_{\mathrm{asy}}},
\]
so the exact normalized slope tends to one as \(\eta\) decreases.
For the fixed-step quantity \(R_s\), the finite-difference error must
also be controlled before concluding convergence.
Numerically, the ratio decreases monotonically from \(1.408\) at
\(\eta=0.1\) to \(1.018\) at \(\eta=10^{-20}\).

\begin{figure}[!htbp]
 \centering
 \includegraphics[width=\textwidth]{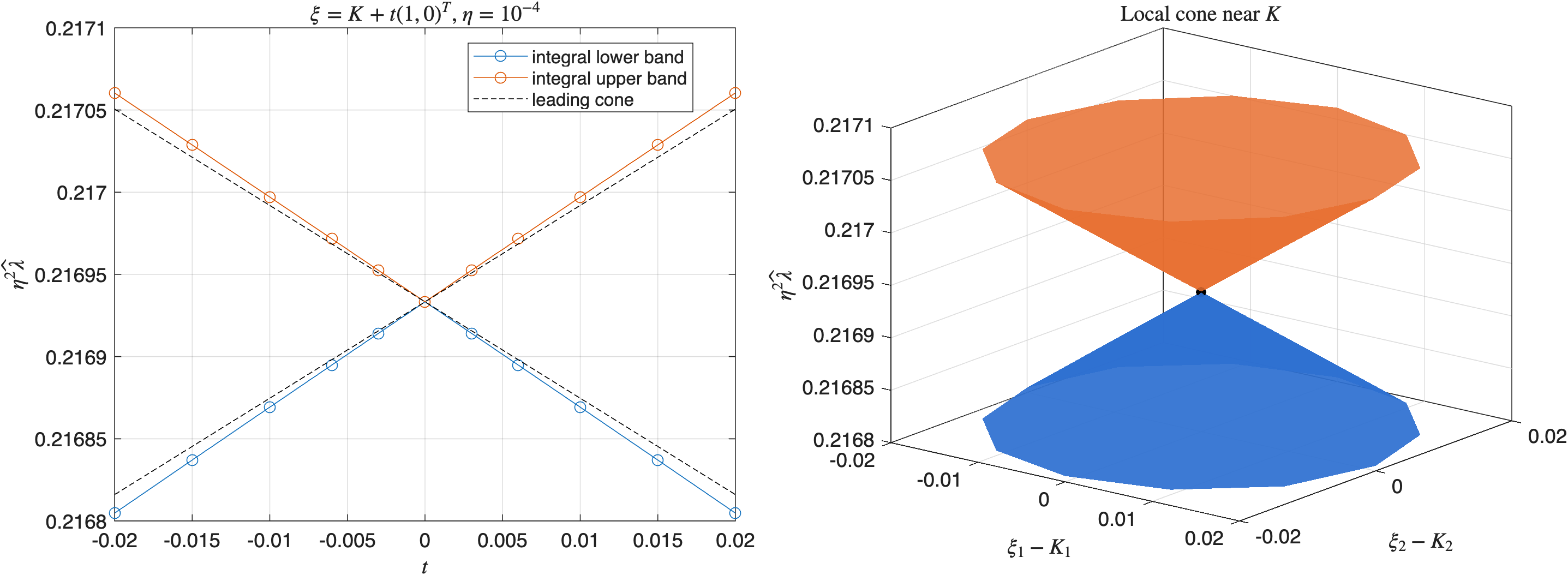}
 \caption{Numerical illustration of the Dirac cone.  Left: the two
 integral-operator bands along \(\xi=K+t(1,0)^T\) for \(\eta=10^{-4}\),
 compared with the leading cone in \eqref{equ:exact-Dirac-cone}.
 Right: the two numerical bands in the neighborhood
 \(\lvert\xi-K\rvert\leq0.02\), plotted in coordinates
 \(\xi-K\) to resolve the two conical sheets meeting at \(K\).}
 \label{fig:numerical-dirac-cone}
\end{figure}

\begin{figure}[!htbp]
 \centering
 \includegraphics[width=.68\textwidth]{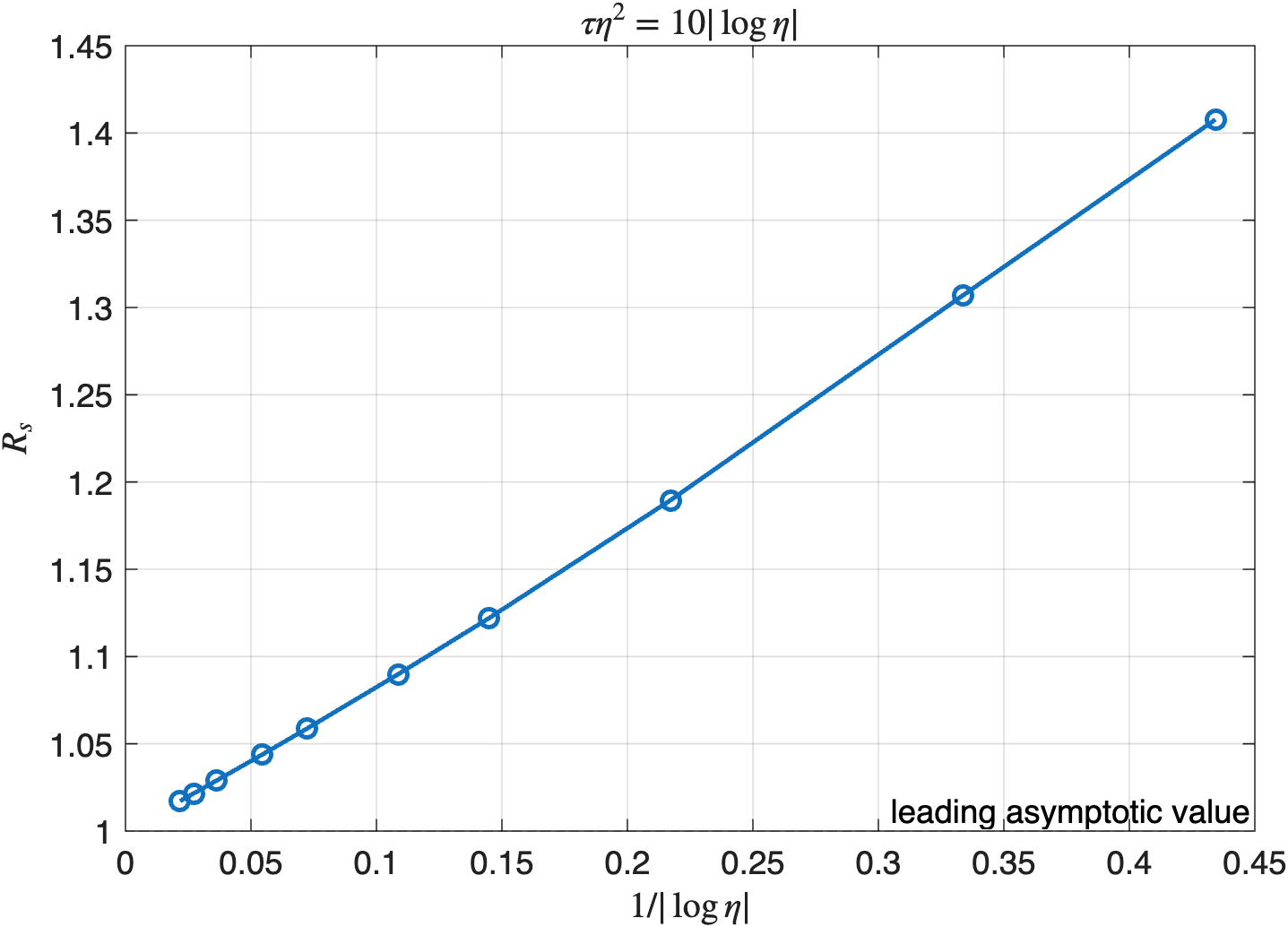}
 \caption{The normalized finite-difference slope \(R_s\) against
 \(1/|\log\eta|\) along \(\tau\eta^2=10|\log\eta|\), for
 \(0.1\geq\eta\geq10^{-20}\).  The exact normalized conical slope
 tends to one.}
 \label{fig:numerical-slope-error}
\end{figure}

Finally, to distinguish the two error contributions while keeping the
inclusion size fixed, we take \(\eta=10^{-8}\) and write
\[
 \alpha:=\tau\eta^2.
\]
The relative asymptotic error in the exact conical slope has the form
\(O(1/L+1/(\alpha L))\).  The first contribution is fixed in this
experiment, while the second decreases as \(\alpha\) grows.
Figure~\ref{fig:alpha-sweep} shows a trend consistent with these estimates: rapid
improvement for \(\alpha<1\), followed by a plateau for
\(\alpha\gtrsim5\).  Once this plateau is reached, increasing \(\tau\)
has little effect unless \(\eta\) is decreased as well.

\begin{figure}[!htbp]
 \centering
 \includegraphics[width=.68\textwidth]{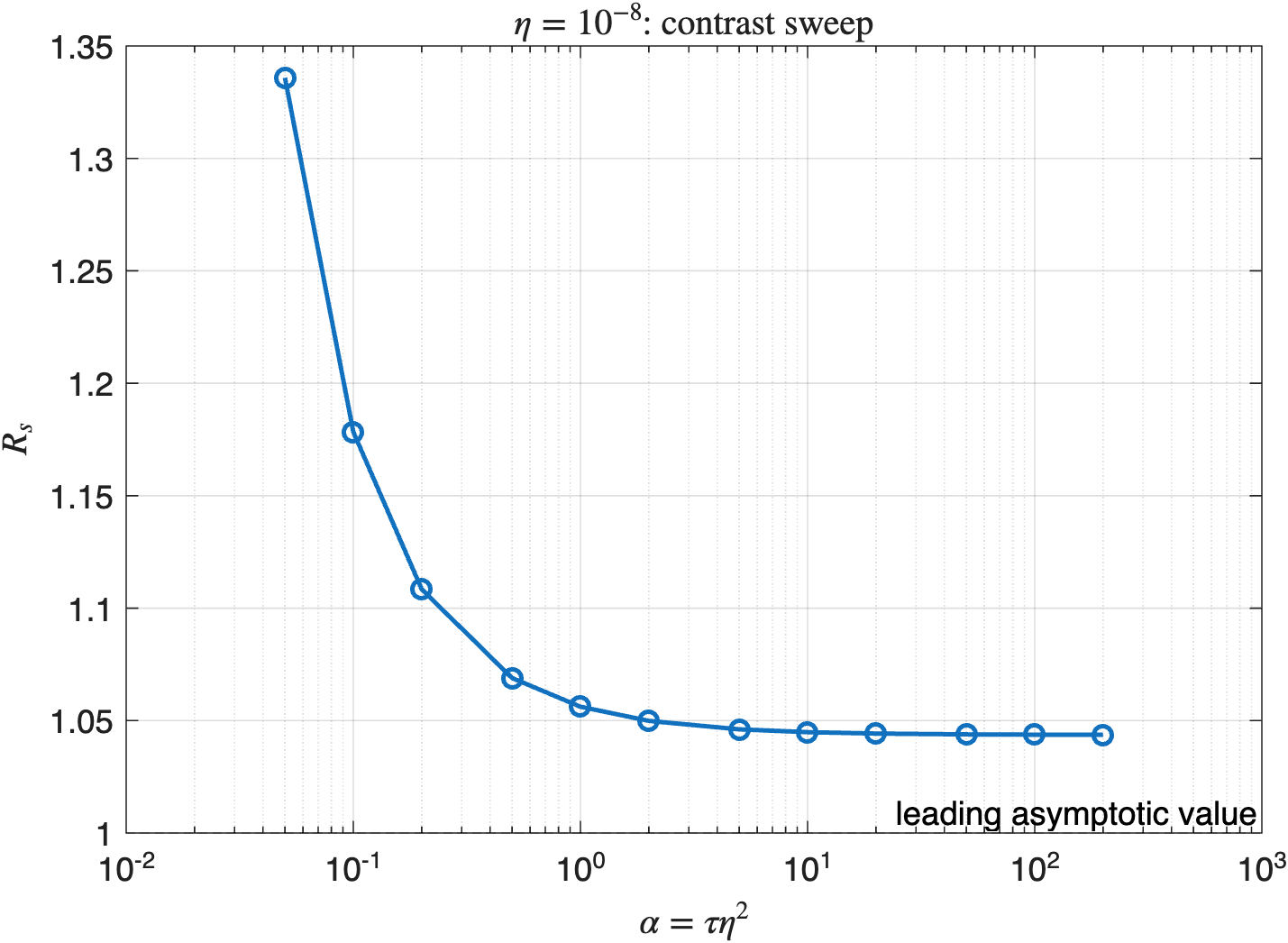}
 \caption{The normalized finite-difference slope \(R_s\) as
 \(\alpha=\tau\eta^2\) varies at the fixed size \(\eta=10^{-8}\).
 The plateau at large \(\alpha\) is consistent with the fixed
 inclusion-size contribution; finite-difference and discretization
 errors may also contribute.}
 \label{fig:alpha-sweep}
\end{figure}

\FloatBarrier
\section{Conclusion}
\label{sec:conclusion}

We have established the existence and continuation of Dirac points in
a two-dimensional honeycomb dielectric structure with two inclusions
per cell. In the joint small-inclusion and high-contrast regime, the
Lippmann--Schwinger formulation reduces the spectral problem near the
leading resonant scale to a two-dimensional interaction matrix. This
reduction yields a two-term eigenvalue expansion, uniform for
quasi-momenta separated from the reciprocal lattice, with explicit
dependence of the remainder on inclusion size and dielectric contrast.
At \(K\), rotation and inversion-conjugation symmetries enforce an
exact double eigenvalue. A Schur-complement analysis of the full
operator pencil, together with the nonvanishing Green-function
coefficient, proves a nonzero conical splitting and gives the leading
asymptotic slope in the scaled eigenvalue.

For each fixed sufficiently large contrast, this small-inclusion
Dirac point initiates a real-analytic eigenvalue branch throughout
the interval before inclusions touch. The continued branch remains
a nondegenerate Dirac point outside a relatively closed discrete
set of inclusion sizes, although its ordered band indices may change
at spectral crossings. Numerical computations for three-petal
inclusions are consistent with the eigenvalue and slope asymptotics,
using a fixed-domain Nystr\"om discretization and Fourier
cross-checks where feasible. Thus, the small-inclusion analysis provides
 a Dirac point from which analytic continuation establishes nondegenerate 
 Dirac crossings throughout the admissible size interval, except at a 
 discrete set of sizes.

\begin{appendices}
\section{\texorpdfstring{Nonvanishing of the constant \(c\)}{Nonvanishing of the constant c}}
\label{app:c-nonzero}

Set
\[
d:=z_1-z_2=\left(\frac1{\sqrt3},0\right)^T,
\qquad |Y|=\frac{\sqrt3}{2},
\qquad p_k:=k+K.
\]
Writing \(G_\lambda^\xi(z):=G_\lambda^\xi(z,0)\), we have
\begin{equation}\label{app:eq:c-Fourier}
\left.\nabla_\xi G_0^\xi\right|_{\xi=K}(d)
=\frac1{|Y|}\sum_{k\in\Lambda^*}e^{ip_k\cdot d}
\left(\frac{i\,d}{|p_k|^2}-\frac{2p_k}{|p_k|^4}\right).
\end{equation}
Since the first term is not absolutely summable, we use an Ewald
decomposition.  From
\[
\frac1{|p|^2}=\int_0^\infty e^{-t|p|^2}\,dt,
\qquad
\frac1{|p|^4}=\int_0^\infty t e^{-t|p|^2}\,dt,
\]
we split the integral at
\[
T:=\frac{\sqrt3}{8\pi}.
\]
The part over \([T,\infty)\) is
\begin{align}
V_{\mathrm{rec}}(T)
:={}&\frac1{|Y|}\sum_{k\in\Lambda^*}
e^{ip_k\cdot d-T|p_k|^2}
\notag\\[-1mm]
&\times\left[
\frac{i\,d}{|p_k|^2}
-2p_k\left(\frac{T}{|p_k|^2}+\frac1{|p_k|^4}\right)
\right].
\label{app:eq:reciprocal-part}
\end{align}
For the part over \((0,T]\), Poisson summation gives
\[
\frac1{|Y|}\sum_{k\in\Lambda^*}
e^{-t|k+\xi|^2}e^{i(k+\xi)\cdot d}
=\frac1{4\pi t}\sum_{\ell\in\Lambda}
e^{-i\xi\cdot\ell}e^{-|d+\ell|^2/(4t)}.
\]
Differentiating in \(\xi\), setting \(\xi=K\), and integrating in
\(t\), we obtain
\begin{equation}\label{app:eq:direct-part}
V_{\mathrm{dir}}(T)
:=-\frac{i}{4\pi}\sum_{\ell\in\Lambda}
\ell e^{-iK\cdot\ell}
E_1\left(\frac{|d+\ell|^2}{4T}\right),
\qquad
E_1(x):=\int_x^\infty\frac{e^{-s}}s\,ds.
\end{equation}
Thus
\begin{equation}\label{app:eq:Ewald}
\left.\nabla_\xi G_0^\xi\right|_{\xi=K}(d)
=V_{\mathrm{dir}}(T)+V_{\mathrm{rec}}(T),
\end{equation}
and both sums are absolutely and exponentially convergent.

We truncate both sums to \(|m|,|n|\le2\), and denote the resulting
first component by
\[
c_2:=\left[V_{\mathrm{dir},2}(T)+V_{\mathrm{rec},2}(T)\right]_1.
\]
To bound the tails, put \(j=\max\{|m|,|n|\}\).  Each square shell
contains \(8j\) points.  For
\(\ell=m\mathbf e_1+n\mathbf e_2\),
\[
|\ell|\le2j,
\qquad
|d+\ell|\ge
a_j:=\frac{\sqrt3}{2}j-\frac1{\sqrt3}.
\]
Using \(E_1(x)\le e^{-x}/x\), we get
\begin{equation}\label{app:eq:direct-tail-final}
\left|[V_{\mathrm{dir}}-V_{\mathrm{dir},2}]_1\right|
\le\frac{16T}{\pi}\sum_{j=3}^\infty
\frac{j^2}{a_j^2}e^{-a_j^2/(4T)}
<2.8541\times10^{-7}.
\end{equation}
Similarly, writing
\[
p_k=\left(m+\frac13\right)\boldsymbol\beta_1
+\left(n+\frac23\right)\boldsymbol\beta_2,
\]
we have
\[
|p_k|\ge
b_j:=\frac{4\pi}{\sqrt6}\left(j-\frac23\right).
\]
Hence
\begin{align}
\left|[V_{\mathrm{rec}}-V_{\mathrm{rec},2}]_1\right|
&\le\frac1{|Y|}\sum_{j=3}^\infty8j e^{-Tb_j^2}
\left(\frac{|d|}{b_j^2}+\frac{2T}{b_j}+\frac2{b_j^3}\right)
\notag\\
&<2.3821\times10^{-5}.
\label{app:eq:reciprocal-tail-final}
\end{align}
In both estimates, the displayed numerical bound follows by comparing
the positive series with a geometric series; the successive-term
ratios are bounded by \(2\times10^{-7}\) and \(3\times10^{-5}\),
respectively.

It remains to evaluate two finite sums of \(25\) terms.  Their phases
are third roots of unity since
\[
e^{-iK\cdot(m\mathbf e_1+n\mathbf e_2)}
=e^{-2\pi i(m+2n)/3},
\qquad
e^{i(m\boldsymbol\beta_1+n\boldsymbol\beta_2+K)\cdot d}
=e^{2\pi i(m+n+1)/3}.
\]
A directed-rounding interval evaluation gives
\[
\begin{aligned}
\bigl[V_{\mathrm{dir},2}\bigr]_1
&\in[-10^{-15},10^{-15}]
-i[0.010565785189922,0.010565785189925],\\
\bigl[V_{\mathrm{rec},2}\bigr]_1
&\in-[7.68616,7.68620]\times10^{-10}
-i[0.030800294748640,0.030800294748644].
\end{aligned}
\]
The calculation uses
outward-rounded decimal interval arithmetic, Machin's formula for
\(\pi\), Euler--Maclaurin summation with an integral-remainder bound
for Euler's constant, and the convergent power series for \(E_1\)
with an alternating-tail bound.
Consequently,
\[
|c_2|>0.04136607993856,
\qquad
|c-c_2|<2.411\times10^{-5}.
\]
The reverse triangle inequality now yields
\[
|c|\ge |c_2|-|c-c_2|>0.04134>0.
\]
Therefore \(c\ne0\).

\section{\texorpdfstring{The exact conical slope at \(K\)}{The exact conical slope at K}}
\label{app:exact-Dirac-cone}
We prove \eqref{equ:exact-Dirac-cone} under the hypotheses of
Theorem~\ref{thm:refined-Dirac-point}.  Throughout this appendix,
\(\widehat\lambda_0=\widehat\lambda_0^\epsilon(\eta)\),
\(L=|\log\eta|\), and inner products are those of
\(L^2(D_\eta)\), conjugate-linear in the first argument.
Choose normalized kernel vectors \(\phi_1\in L_1\) and
\(\phi_2=P_\eta\phi_1\in L_2\).
Since \(\Pi\) commutes with the rescaled rotation and
\(\mathcal E_0\cap U_\eta L_j=\mathbb C e_j\), the constant-mode
estimate, with the phase fixed by
\(\langle e_1,U_\eta\phi_1\rangle>0\), gives
\[
 \left\|\phi_j-
 \frac{\chi_{D_{\eta,j}}}{\eta\sqrt{|D|}}\right\|_{L^2(D_\eta)}
 =O(\delta_{\epsilon,\eta}),\qquad j=1,2.
\]
Also
\(\widehat\lambda_0\eta^2=O(L^{-1})\) and
\(\epsilon\widehat\lambda_0=o(1)\).

We first establish some estimates that will be used in the proof below. 
For \(\xi\) in a fixed neighborhood of \(K\) and \(\omega\) near
zero, the free spectral gap ensures analyticity in \(\omega\).
The Fourier series for \(\partial_\omega G_\omega^\xi\) and
\(\partial_\omega\nabla_\xi G_\omega^\xi\) converge absolutely
and uniformly: their terms are bounded by
\(C|k+\xi|^{-4}\) and
\(C(|k+\xi|^{-4}+|k+\xi|^{-5})\), respectively, for bounded
\(x,y\).  Hence
\[
 \nabla_\xi(G_\omega^\xi-G_0^\xi)
 =\int_0^\omega\partial_t\nabla_\xi G_t^\xi\,dt
 =O(|\omega|),\qquad
 \partial_\omega G_\omega^\xi=O(1).
\]
To bound \(\nabla_\xi G_0^\xi\), differentiate its Fourier
representation in the distributional sense, obtaining, away
from the spatial singularities,
\[
 \nabla_\xi G_0^\xi(x,y)
 =i(x-y)G_0^\xi(x,y)
 -\frac2{|Y|}\sum_{k\in\Lambda^*}
 \frac{(k+\xi)e^{i(k+\xi)\cdot(x-y)}}{|k+\xi|^4}.
\]
The series is absolutely uniformly convergent, since
\(\sum_k|k+\xi|^{-3}\le C\) in two dimensions.
For sufficiently small \(\eta\), differences of points in
\(D_\eta\) stay uniformly away from nonzero lattice points.
The local logarithmic expansion and boundedness away from the
diagonal therefore give
\[
 |(x-y)G_0^\xi(x,y)|
 \le C|x-y|\bigl(1+|\log|x-y||\bigr)\le C,
 \qquad x,y\in D_\eta,\quad x\ne y.
\]
Consequently, uniformly in the parameters under consideration,
\[
 |\nabla_\xi G_\omega^\xi(x,y)|
 \le |\nabla_\xi G_0^\xi(x,y)|+C|\omega|\le C.
\]
The differentiated kernel represents
\(\nabla_\xi\mathcal T_{D_\eta}^{\xi,\omega}\).  Its
Hilbert--Schmidt bound gives
\[
 \|\nabla_\xi\mathcal T_{D_\eta}^{\xi,\omega}\|
 \le\left(\int_{D_\eta}\int_{D_\eta}
 |\nabla_\xi G_\omega^\xi(x,y)|^2\,dx\,dy\right)^{1/2}
 \le C|D_\eta|=2C|D|\eta^2.
\]
Applying the same estimate to the bounded kernel
\(\partial_\omega G_\omega^\xi\) proves
\begin{equation}
 \|\nabla_\xi\mathcal T_{D_\eta}^{\xi,\omega}\|
 +\|\partial_\omega\mathcal T_{D_\eta}^{\xi,\omega}\|
 \le C\eta^2.
 \label{app:kernel-derivative-bounds}
\end{equation}

Now recall the operator pencil from
Theorem~\ref{thm:two_characteristic_values},
\[
 \widehat{\mathcal A}(\widehat\lambda,\xi;\epsilon,\eta)
 =I-\widehat\lambda\mathcal T_{D_\eta}^{\xi,\epsilon\widehat\lambda}.
\]
For fixed \(\epsilon,\eta\), it is real analytic in
\((\widehat\lambda,\xi)\) in operator norm. Write
\(\Delta\widehat\lambda=\widehat\lambda-\widehat\lambda_0\) and
\(h=\xi-K\). Expanding at the characteristic pair
\((\widehat\lambda_0,K)\) gives
\[
 \widehat{\mathcal A}(\widehat\lambda_0+\Delta\widehat\lambda,
 K+h)
 =\widehat{\mathcal A}(\widehat\lambda_0,K)
 +\Delta\widehat{\mathcal A}
 +O_{\epsilon,\eta}(|\Delta\widehat\lambda|^2+|h|^2),
\]
where \(\epsilon,\eta\) are suppressed and
\[
\begin{aligned}
 \Delta\widehat{\mathcal A}
 ={}&-\left(\mathcal T_{D_\eta}^{K,\epsilon\widehat\lambda_0}
 +\epsilon\widehat\lambda_0
 \left.\partial_\omega\mathcal T_{D_\eta}^{K,\omega}
 \right|_{\omega=\epsilon\widehat\lambda_0}\right)\Delta\widehat\lambda-\widehat\lambda_0 h\cdot
 \left.\nabla_\xi\mathcal T_{D_\eta}^{\xi,\epsilon\widehat\lambda_0}
 \right|_{\xi=K}.
\end{aligned}
\]
Let
\[
 \Pi_\parallel\psi=\sum_{j=1}^2(\phi_j,\psi)\phi_j,
 \qquad \Pi_\perp=I-\Pi_\parallel.
\]
Since the base pencil is self-adjoint and has kernel
\(\operatorname{span}\{\phi_1,\phi_2\}\),
\[
 \widehat{\mathcal A}(\widehat\lambda_0,K)\Pi_\parallel
 =\Pi_\parallel\widehat{\mathcal A}(\widehat\lambda_0,K)=0.
\]
Its restriction to \(\operatorname{Ran}\Pi_\perp\) is invertible. 
For \(\psi_\parallel=\Pi_\parallel\psi\) and
\(\psi_\perp=\Pi_\perp\psi\), the exact equation is
\[
 \begin{cases}
 \Pi_\parallel\widehat{\mathcal A}\Pi_\parallel\psi_\parallel
 +\Pi_\parallel\widehat{\mathcal A}\Pi_\perp\psi_\perp=0,\\
 \Pi_\perp\widehat{\mathcal A}\Pi_\parallel\psi_\parallel
 +\Pi_\perp\widehat{\mathcal A}\Pi_\perp\psi_\perp=0.
 \end{cases}
\]
Solving the second equation gives
\[
 \psi_\perp
 =-(\Pi_\perp\widehat{\mathcal A}\Pi_\perp)^{-1}
 \Pi_\perp\widehat{\mathcal A}\Pi_\parallel\psi_\parallel.
\]
Substitution into the first gives the equivalent Schur equation
\begin{equation}\label{app:eq:exact-schur}
 \left[\Pi_\parallel\widehat{\mathcal A}\Pi_\parallel
 -\Pi_\parallel\widehat{\mathcal A}\Pi_\perp
 (\Pi_\perp\widehat{\mathcal A}\Pi_\perp)^{-1}
 \Pi_\perp\widehat{\mathcal A}\Pi_\parallel\right]\psi_\parallel=0.
\end{equation}
Here \(\psi\ne0\) if and only if \(\psi_\parallel\ne0\).
Both cross blocks are
\(O_{\epsilon,\eta}(|\Delta\widehat\lambda|+|h|)\), whereas
the inverse block is bounded.  Thus
\[
 \left[\Pi_\parallel\Delta\widehat{\mathcal A}\Pi_\parallel
 +O_{\epsilon,\eta}(|\Delta\widehat\lambda|^2+|h|^2)\right]
 \psi_\parallel=0,\qquad \psi_\parallel\ne0.
\]
In the orthonormal basis \(\{\phi_1,\phi_2\}\), this is equivalent to
\[
 \det\begin{pmatrix}
 (\phi_1,\Delta\widehat{\mathcal A}\phi_1)&
 (\phi_1,\Delta\widehat{\mathcal A}\phi_2)\\
 (\phi_2,\Delta\widehat{\mathcal A}\phi_1)&
 (\phi_2,\Delta\widehat{\mathcal A}\phi_2)
 \end{pmatrix}
 +O_{\epsilon,\eta}(|\Delta\widehat\lambda|^3+|h|^3)=0.
\]

Rotation symmetry makes the spectral-parameter derivative
diagonal in \(\{\phi_1,\phi_2\}\), and inversion conjugation makes
its diagonal entries equal.
For the momentum derivative, write
\(T_h:=\mathcal T_{D_\eta}^{K+h,\epsilon\widehat\lambda_0}\) and
\[
 \Theta_h:=e^{-2ih\cdot\mathbf e_1}\chi_{D_{\eta,1}}
          +e^{-ih\cdot\mathbf e_1}\chi_{D_{\eta,2}}.
\]
A change of variables under the physical rotation gives
\[
 T_{Rh}=\Theta_h R_\eta T_h R_\eta^{-1}\Theta_h^{-1}.
\]
Differentiating at \(h=0\) introduces, in addition to the
rotated derivative, the commutator \([B(h),T_0]\), where
\(B(h):=-i(h\cdot\mathbf e_1)
(2\chi_{D_{\eta,1}}+\chi_{D_{\eta,2}})\).
Its compression to the kernel vanishes because
\(T_0\Pi_\parallel=\Pi_\parallel T_0
=\widehat\lambda_0^{-1}\Pi_\parallel\).
Thus the compressed derivative transforms by conjugation with
\(\operatorname{diag}(\kappa,\kappa^2)\): its diagonal entries
are rotation-invariant linear functions of \(h\), hence zero,
and its upper-right entry transforms by \(\kappa^{-1}\), hence
is a multiple of \(h_1+ih_2\).  Self-adjointness makes the
lower-left entry its conjugate.
The determinant equation consequently becomes
\begin{equation}\label{app:eq:cone-determinant}
\begin{aligned}
 &(\Delta\widehat\lambda)^2
 \left(\phi_1,\left[
 \mathcal T_{D_\eta}^{K,\epsilon\widehat\lambda_0}
 +\epsilon\widehat\lambda_0
 \left.\partial_\omega\mathcal T_{D_\eta}^{K,\omega}
 \right|_{\omega=\epsilon\widehat\lambda_0}
 \right]\phi_1\right)^2\\
 &\quad-\widehat\lambda_0^2|h|^2
 \left|\left(\phi_1,
 \left.\partial_{\xi_1}\mathcal T_{D_\eta}^{\xi,\epsilon\widehat\lambda_0}
 \right|_{\xi=K}\phi_2\right)\right|^2
 +O_{\epsilon,\eta}(|\Delta\widehat\lambda|^3+|h|^3)=0.
\end{aligned}
\end{equation}

The kernel equation and \eqref{app:kernel-derivative-bounds} give,
for \(j=1,2\),
\[
\begin{aligned}
 &\left(\phi_j,\left[
 \mathcal T_{D_\eta}^{K,\epsilon\widehat\lambda_0}
 +\epsilon\widehat\lambda_0
 \left.\partial_\omega\mathcal T_{D_\eta}^{K,\omega}
 \right|_{\omega=\epsilon\widehat\lambda_0}
 \right]\phi_j\right)\\
 &\qquad=\frac1{\widehat\lambda_0}+O(\epsilon/L)
 =\frac{|D|\eta^2L}{2\pi}
 \left[1+O\left(\frac1L+\frac{\epsilon}{\eta^2L^2}\right)\right]>0.
\end{aligned}
\]
For the off-diagonal entry, the constant-mode approximation,
\eqref{app:kernel-derivative-bounds}, and Taylor expansion
of the spatially smooth kernel near \((z_1,z_2)\) give
\[
\begin{aligned}
 &\left(\phi_1,
 \left.\nabla_\xi\mathcal T_{D_\eta}^{\xi,\epsilon\widehat\lambda_0}
 \right|_{\xi=K}\phi_2\right)\\
 &\quad=\frac1{\eta^2|D|}
 \int_{D_{\eta,1}}\int_{D_{\eta,2}}
 \left.\nabla_\xi G_{\epsilon\widehat\lambda_0}^\xi
 \right|_{\xi=K}(x,y)\,dy\,dx
 +O(\eta^2\delta_{\epsilon,\eta})\\
 &\quad=\frac{\eta^2}{|D|}\int_D\int_D
 \left.\nabla_\xi G_{\epsilon\widehat\lambda_0}^\xi
 \right|_{\xi=K}(z_1+\eta x,z_2-\eta y)\,dx\,dy
 +O(\eta^2\delta_{\epsilon,\eta})\\
 &\quad=\eta^2|D|c\begin{pmatrix}1\\i\end{pmatrix}
 +O(\eta^3+\epsilon/L+\eta^2\delta_{\epsilon,\eta})\\
 &\quad=\eta^2|D|c\begin{pmatrix}1\\i\end{pmatrix}
 +O(\eta^2/L+\epsilon/L).
\end{aligned}
\]

Define
\begin{equation}\label{app:eq:exact-slope}
 v_{\epsilon,\eta}:=
 \frac{\widehat\lambda_0
 \left|\left(\phi_1,
 \left.\partial_{\xi_1}\mathcal T_{D_\eta}^{\xi,\epsilon\widehat\lambda_0}
 \right|_{\xi=K}\phi_2\right)\right|}
 {\left(\phi_1,\left[
 \mathcal T_{D_\eta}^{K,\epsilon\widehat\lambda_0}
 +\epsilon\widehat\lambda_0
 \left.\partial_\omega\mathcal T_{D_\eta}^{K,\omega}
 \right|_{\omega=\epsilon\widehat\lambda_0}
 \right]\phi_1\right)}
 =\frac{4\pi^2|c|}{|D|\eta^2L^2}
 +O\left(\frac1{\eta^2L^3}+\frac{\epsilon}{\eta^4L^3}\right)>0.
\end{equation}
Let \(S(\delta,h)\) be the matrix of the operator in brackets in
\eqref{app:eq:exact-schur}, in the basis \(\{\phi_1,\phi_2\}\),
where \(\delta=\Delta\widehat\lambda\), and let \(s>0\) be the
denominator in \eqref{app:eq:exact-slope}. Set
\(h=r\omega\), \(|\omega|=1\), and \(\delta=r\mu\).
Since \(S\) is analytic and \(S(0,0)=0\), the function
\(F(r,\mu,\omega):=r^{-2}\det S(r\mu,r\omega)\) extends analytically
to \(r=0\). By \eqref{app:eq:cone-determinant} and
\eqref{app:eq:exact-slope},
\[
 F(0,\mu,\omega)=s^2(\mu^2-v_{\epsilon,\eta}^2),
 \qquad
 \partial_\mu F(0,\pm v_{\epsilon,\eta},\omega)
 =\pm2s^2v_{\epsilon,\eta}\ne0.
\]
The implicit function theorem therefore gives two real roots
\(\mu_\pm(r,\omega)=\pm v_{\epsilon,\eta}+O_{\epsilon,\eta}(r)\),
uniformly in \(\omega\) by compactness of the unit circle.
By the Schur-complement equivalence and
Theorem~\ref{thm:two_characteristic_values}, these yield the two
characteristic values already counted. Multiplying by \(r=|h|\) gives
\[
 \widehat\lambda_\pm(K+h)=\widehat\lambda_0
 \pm v_{\epsilon,\eta}|h|+O_{\epsilon,\eta}(|h|^2),
\]
which, together with \eqref{app:eq:exact-slope}, proves
\eqref{equ:exact-Dirac-cone}.

\end{appendices}
\newpage
\bibliographystyle{plain}
\bibliography{highcon}

@book{reed2012methods,
  title={Methods of modern mathematical physics: Functional analysis},
  author={Reed, Michael},
  year={2012},
  publisher={Elsevier}
}

@book{lewin2024spectral,
  title={Spectral theory and quantum mechanics},
  author={Lewin, Mathieu},
  year={2024},
  publisher={Springer}
}

@book{kato1995perturbation,
  title={Perturbation Theory for Linear Operators},
  author={Kato, Tosio},
  edition={2},
  year={1995},
  publisher={Springer},
  address={Berlin}
}

@article{introWallace1947,
 author={Wallace, P. R.}, title={The Band Theory of Graphite},
 journal={Physical Review}, year={1947}, volume={71}, pages={622--634},
 doi={10.1103/PhysRev.71.622}, url={https://doi.org/10.1103/PhysRev.71.622}
}

@article{introNovoselov2005,
 author={Novoselov, K. S. and Geim, A. K. and Morozov, S. V. and Jiang, D. and Katsnelson, M. I. and Grigorieva, I. V. and Dubonos, S. V. and Firsov, A. A.},
 title={Two-dimensional gas of massless {Dirac} fermions in graphene}, journal={Nature}, year={2005}, volume={438}, pages={197--200},
 doi={10.1038/nature04233}, url={https://doi.org/10.1038/nature04233}
}

@article{introCastroNeto2009,
 author={Castro Neto, A. H. and Guinea, F. and Peres, N. M. R. and Novoselov, K. S. and Geim, A. K.},
 title={The electronic properties of graphene}, journal={Reviews of Modern Physics}, year={2009}, volume={81}, pages={109--162},
 doi={10.1103/RevModPhys.81.109}, url={https://arxiv.org/abs/0709.1163}
}

@article{introPeleg2007,
 author={Peleg, Or and Bartal, Guy and Freedman, Barak and Manela, Ofer and Segev, Mordechai and Christodoulides, Demetrios N.},
 title={Conical Diffraction and Gap Solitons in Honeycomb Photonic Lattices}, journal={Physical Review Letters}, year={2007}, volume={98}, pages={103901},
 doi={10.1103/PhysRevLett.98.103901}, url={https://doi.org/10.1103/PhysRevLett.98.103901}
}

@article{introOchiaiOnoda2009,
 author={Ochiai, Tetsuyuki and Onoda, Masaru}, title={Photonic analog of graphene model and its extension: {Dirac} cone, symmetry, and edge states}, journal={Physical Review B}, year={2009}, volume={80}, pages={155103},
 doi={10.1103/PhysRevB.80.155103}, url={https://arxiv.org/abs/0905.4149}
}

@article{introHaldaneRaghu2008,
 author={Haldane, F. D. M. and Raghu, S.}, title={Possible Realization of Directional Optical Waveguides in Photonic Crystals with Broken Time-Reversal Symmetry}, journal={Physical Review Letters}, year={2008}, volume={100}, pages={013904},
 doi={10.1103/PhysRevLett.100.013904}, url={https://doi.org/10.1103/PhysRevLett.100.013904}
}

@article{introRechtsman2013,
 author={Rechtsman, Mikael C. and Zeuner, Julia M. and Plotnik, Yonatan and Lumer, Yaakov and Podolsky, Daniel and Dreisow, Felix and Nolte, Stefan and Segev, Mordechai and Szameit, Alexander}, title={Photonic {Floquet} topological insulators}, journal={Nature}, year={2013}, volume={496}, pages={196--200},
 doi={10.1038/nature12066}, url={https://doi.org/10.1038/nature12066}
}

@article{introKuchment2016,
 author={Kuchment, Peter}, title={An overview of periodic elliptic operators}, journal={Bulletin of the American Mathematical Society}, year={2016}, volume={53}, pages={343--414},
 doi={10.1090/bull/1528}, url={https://arxiv.org/abs/1510.00971}
}

@article{introFeffermanWeinstein2012,
 author={Fefferman, Charles L. and Weinstein, Michael I.}, title={Honeycomb lattice potentials and {Dirac} points}, journal={Journal of the American Mathematical Society}, year={2012}, volume={25}, pages={1169--1220},
 doi={10.1090/S0894-0347-2012-00745-0}, url={https://arxiv.org/abs/1202.3839}
}

@article{introBerkolaikoComech2018,
 author={Berkolaiko, Gregory and Comech, Andrew}, title={Symmetry and {Dirac} points in graphene spectrum}, journal={Journal of Spectral Theory}, year={2018}, volume={8}, pages={1099--1147}, url={https://ems.press/journals/jst/articles/15625}
}

@article{introFeffermanWeinstein2014,
 author={Fefferman, Charles L. and Weinstein, Michael I.}, title={Wave Packets in Honeycomb Structures and Two-Dimensional {Dirac} Equations}, journal={Communications in Mathematical Physics}, year={2014}, volume={326}, pages={251--286},
 doi={10.1007/s00220-013-1847-2}, url={https://arxiv.org/abs/1212.6072}
}

@article{introKuchmentPost2007,
 author={Kuchment, Peter and Post, Olaf}, title={On the Spectra of Carbon Nano-Structures}, journal={Communications in Mathematical Physics}, year={2007}, doi={10.1007/s00220-007-0316-1}, url={https://arxiv.org/abs/math-ph/0612021}
}

@article{introLee2016,
 author={Lee, Minjae}, title={{Dirac} Cones for Point Scatterers on a Honeycomb Lattice}, journal={SIAM Journal on Mathematical Analysis}, year={2016}, pages={1459--1488},
 doi={10.1137/14095827X}, url={https://doi.org/10.1137/14095827X}
}

@article{introFeffermanLeeThorpWeinstein2018,
 author={Fefferman, Charles L. and Lee-Thorp, James P. and Weinstein, Michael I.}, title={Honeycomb {Schr\"odinger} Operators in the Strong Binding Regime}, journal={Communications on Pure and Applied Mathematics}, year={2018}, volume={71}, pages={1178--1270},
 doi={10.1002/cpa.21735}, url={https://arxiv.org/abs/1610.04930}
}

@article{introLeeThorpWeinsteinZhu2019,
 author={Lee-Thorp, J. P. and Weinstein, M. I. and Zhu, Y.}, title={Elliptic Operators with Honeycomb Symmetry: {Dirac} Points, Edge States and Applications to Photonic Graphene}, journal={Archive for Rational Mechanics and Analysis}, year={2019}, volume={232}, pages={1--63},
 doi={10.1007/s00205-018-1315-4}, url={https://arxiv.org/abs/1710.03389}
}

@article{introFeffermanLeeThorpWeinstein2016,
 author={Fefferman, C. L. and Lee-Thorp, J. P. and Weinstein, M. I.}, title={Edge States in Honeycomb Structures}, journal={Annals of PDE}, year={2016}, volume={2}, pages={12},
 doi={10.1007/s40818-016-0015-3}, url={https://doi.org/10.1007/s40818-016-0015-3}
}

@article{introCassierWeinstein2021,
 author={Cassier, Maxence and Weinstein, Michael I.}, title={High Contrast Elliptic Operators in Honeycomb Structures}, journal={Multiscale Modeling \& Simulation}, year={2021}, volume={19}, pages={1784--1856},
 doi={10.1137/21M1408968}, url={https://arxiv.org/abs/2103.16682}
}

@article{introAmmariBubbles2020,
 author={Ammari, Habib and Fitzpatrick, Brian and Hiltunen, Erik Orvehed and Lee, Hyundae and Yu, Sanghyeon}, title={Honeycomb-Lattice {Minnaert} Bubbles}, journal={SIAM Journal on Mathematical Analysis}, year={2020}, volume={52}, pages={5441--5466},
 doi={10.1137/19M1281782}, url={https://arxiv.org/abs/1811.03905}
}

@article{introAmmariHomogenization2020,
 author={Ammari, Habib and Hiltunen, Erik Orvehed and Yu, Sanghyeon}, title={A High-Frequency Homogenization Approach Near the {Dirac} Points in Bubbly Honeycomb Crystals}, journal={Archive for Rational Mechanics and Analysis}, year={2020}, volume={238}, pages={1559--1583},
 doi={10.1007/s00205-020-01572-w}, url={https://arxiv.org/abs/1812.06178}
}

@article{introLiLinZhang2023,
 author={Li, Wei and Lin, Junshan and Zhang, Hai}, title={{Dirac} Points for the Honeycomb Lattice with Impenetrable Obstacles}, journal={SIAM Journal on Applied Mathematics}, year={2023}, volume={83}, pages={1546--1571},
 doi={10.1137/22M1505116}, url={https://doi.org/10.1137/22M1505116}
}

@article{introAmmariDielectric2019,
 author={Ammari, Habib and Dabrowski, Alexander and Fitzpatrick, Brian and Millien, Pierre and Sini, Mourad}, title={Subwavelength resonant dielectric nanoparticles with high refractive indices}, journal={Mathematical Methods in the Applied Sciences}, year={2019}, volume={42}, pages={6567--6579},
 doi={10.1002/mma.5760}, url={https://doi.org/10.1002/mma.5760}
}

@article{introAmmariLiZou2023,
 author={Ammari, Habib and Li, Bowen and Zou, Jun}, title={Mathematical analysis of electromagnetic scattering by dielectric nanoparticles with high refractive indices}, journal={Transactions of the American Mathematical Society}, year={2023}, volume={376}, pages={39--90},
 doi={10.1090/tran/8641}, url={https://arxiv.org/abs/2003.10223}
}

@book{ammari2018mathematical,
  title={Mathematical and computational methods in photonics and phononics},
  author={Ammari, Habib and Fitzpatrick, Brian and Kang, Hyeonbae and Ruiz, Matias and Yu, Sanghyeon and Zhang, Hai},
  volume={235},
  year={2018},
  publisher={American Mathematical Soc.}
}
\end{document}